\newtheorem{theo}{Theorem}[section]
\newtheorem{prop}{Proposition}[section]
\newtheorem{lem}{Lemma}[section]
\newtheorem{rk}{Remark}[section]
\newtheorem{coro}{Corollary}[section]
\numberwithin{equation}{section}
\newcommand{\N}{{\mathbb N}}
\newcommand{\Z}{{\mathbb Z}}
\newcommand{\R}{{\mathbb R}}
\newcommand{\calE}{{\cal E}}
\newcommand{\ds}{\displaystyle}
\newcommand{\la}{\lambda}
\newcommand{\be}{\beta}
\newcommand{\Om}{\Omega}
\newcommand{\cH}{\check{H}}
\newcommand{\ccalE}{\check{\cal{E}}}
\begin{document}
\title{Robin Laplacian in the Large coupling limit: Convergence and spectral asymptotic}
\author{\normalsize  Faten Belgacem\footnote{ Department of Mathematics, Faculty
of Sciences of Gab\`es. Uni. Gab\`es, Tunisia. }\ ,
Hichem BelHadjali\footnote{Department of Mathematics, I.P.E.I.N. Uni. Carthage, Tunisia. E-mail: hichem.belhadjali@ipein.rnu.tn}\ ,
Ali BenAmor\footnote{corresponding author} \footnote{Department of Mathematics, Faculty
of Sciences of Gab\`es. Uni. Gab\`es, Tunisia. E-mail: ali.benamor@ipeit.rnu.tn}
\ \& Amina Thabet\footnote{Department of Mathematics, Faculty
of Sciences of Gab\`es. Uni. Gab\`es, Tunisia.}
}
\date{}
\maketitle
\begin{abstract}
We study convergence modes as well as their respective rates for  the resolvent difference of Robin and Dirichlet Laplacian on bounded smooth domains in the large coupling limit. Asymptotic expansions for the resolvent,
the eigenprojections and the eigenvalues of the Robin  Laplacian are performed. Finally we apply our results to the case of the  unit disc.
\end{abstract}
\textbf{Keywords:} Robin Laplacian, uniform convergence, trace class convergence,
rate of convergence, asymptotic expansion.
\section{Introduction}
Let $\Omega\subset\mathbb{R}^{d}$ be an  open bounded domain, with smooth boundary $\Gamma$ and $\sigma$ the normalized surface measure on $\Gamma$.\\
We consider the bilinear symmetric form defined in $L^2(\Om):=L^2(\Om,dx)$ by
\begin{eqnarray}
D({\cal E}^{\beta})=H^1({\Omega}),\ {\cal E}^{\beta}(u,v):=
\int_{{\Omega}}\nabla u\cdot\nabla v\,dx+\beta\int_{\Gamma} uv\,d\sigma,\
\beta\geq 0.
\end{eqnarray}
Thanks to the continuity of the trace operator from  $H^{1}(\Omega)$ into $L^{2}(\Gamma,\sigma)$), the form $\calE^{\beta}$ is closed. Denote by $H_{\beta}$ the selfadjoint operator associated to $\cal E^{\beta}$ via Kato representation theorem. The operator $H_\beta$ is commonly named  the Laplacian with Robin boundary conditions and is characterized by
\begin{eqnarray}
D(H_\beta) = \big\{ u\in H^2(\Om),\ \frac{\partial u}{\partial\nu}+\beta u=0,  &\mbox{on}\ \Gamma\big\},\
H_\beta u = -\Delta u,\ {\rm on}\ \Om,
\end{eqnarray}
where  $\nu$ is  the outer normal unit vector  on $\Gamma$.\\
%
%
%
By Kato's monotone convergence theorem for sesquilinear  forms (see \cite[Theorem 3.13a, p.461 ]{Kato}), the bilinear symmetric forms $\calE^{\beta}$ increase, as $\beta$  increases to infinity, to the closed bilinear symmetric form $\calE^\infty$, defined by
\begin{eqnarray}
D(\calE^\infty)=\big\{u\in H^1(\Om),\ u=0,\ {\rm on}\ \Gamma\big\},\ \calE^\infty(u,v)=\int_\Om \nabla u\cdot\nabla v\,dx.
\end{eqnarray}
Thus $D(\calE^\infty)=H_0^1(\Om)$ and $\calE^\infty$ is nothing else but the quadratic form associated to the Dirichlet Laplacian in $L^2(\Om)$, which we denote by $-\Delta_D$. Thereby we obtain the strong convergence
\begin{eqnarray}
\lim_{\beta\to\infty}(H_{\beta} + 1)^{-1} = (-\Delta_D+1)^{-1},\ {\rm strongly}.
\end{eqnarray}
In a wide variety of applications it turns out that it
is more easy to analyze the limit than the approximating operators $(H_{\beta} + 1)^{-1}$. For
this reason one might use the following strategy for the investigation of the
operator $H_{\beta}$ for large $\beta$: One studies the limit of the operators $(H_{\beta} + 1)^{-1}$
and estimates the error one makes by replacing $(H_{\beta} + 1)^{-1}$ by the limit. This
leads to the question about how fast the operators $(H_{\beta} + 1)^{-1}$ converge. It
is also important to find out which kind of convergence takes place. For
instance convergence w.r.t. the operator norm admits much stronger conclusions
about the spectral properties than strong convergence, cf., e.g., the
discussion of this point in \cite{Reed}, chapter VIII.7.\\
In this spirit it is also interesting and practical to write down explicit asymptotic expansions for the operator $(H_\beta + 1)^{-1}$ and possibly for the eigenprojections and eigenvalues of the operator $H_\beta$ for large $\beta$.\\
On the light of these motivations, we shall establish, in these notes, operator norm convergence as well as convergence within Schatten--von Neumann ideals of $(H_{\beta} + 1)^{-1}$ towards $(-\Delta_D+1)^{-1}$ as $\beta\to\infty$. The speed of convergence for both convergence modes will be also determined.  Furthermore large coupling asymptotic for spectral objects will be performed.\\
An aspect of novelty at this stage, among others, is that we shall establish a second order asymptotic of the eigenvalues which coefficients are explicitly computed. In its own this expansion generalizes and push forward the one given in \cite{carbou} where the Neumann Laplacian with high conductivity inside $\Om$ is studied.\\
Let us emphasize that although we shall consider regular bounded domains, our method (which basically rests on the theory elaborated in \cite{brasche-demuth,benamor-brasche,BBB,bbb12,BBBMath}) still works for exterior domains with smooth boundary, regarding convergence of resolvents differences.\\
Physically the Laplacian with Robin boundary conditions describes the interaction of a particle inside $\Om$ with a potential of strength $\beta$ concentrated on the boundary $\Gamma$. Thus for large $\beta$ it describes the motion of a particle inside a set with high conductivity on the boundary (superconductivity on the boundary). We shall show among others that this phenomena is completely different from the case of having conductivity inside $\Om$, concerning convergence modes and convergence rates and hence spectral asymptotic.\\
The outline of this paper is as follows: In section 2 we give some preliminaries, whereas  in section 3 we prove uniform  convergence as well as convergence with respect to the Schatten--von Neumann norm of $(H_{\beta}+1)^{-1}-(-\Delta_D + 1)^{-1}$. The rate of convergence, with respect to both convergence types, is also discussed in this section. Section 4 and 5  are devoted to establish the asymptotic expansions for the resolvent, the projection and the eigenvalues of Laplacian with Robin boundary conditions for large coupling constant. In the last section  we work out the case where $\Om$ is the unit disc.
\section{Preliminary}
Along the paper we adopt the following notations:
\begin{itemize}
\item $K_{1}=(-\Delta_{N}+1)^{-1}$, where $-\Delta_N$ is the Neumann Laplacian on $\Om$.
\item $H_{\beta}$ is the selfadjoint operator in $L^2(\Omega)$ associated with ${\cal E}^{\beta}$.
\item $D_{\beta}=K_1-(H_{\beta}+1)^{-1}$
\item $D_{\infty}$ is the strong limit $lim_{\beta\rightarrow\infty}D_{\beta}=K_1 - (-\Delta_D + 1)^{-1}$.
\item $\calE[u]=\calE(u,u),\ \forall\,u \in H^1(\Om)$
\item $\calE_1 =\calE + (\cdot,\cdot)_{L^2(\Om)}$.
\end{itemize}
We designate by  $J$ the operator {\em  trace on the boundary} of functions from $H^1(\Om)$:
\begin{eqnarray}
J:(H^1(\Om),\calE_1)\to L^2(\Gamma):=L^2(\Gamma,\sigma),\ Ju=tr u.
\end{eqnarray}
As $\Gamma$ is smooth, it is  known that $Ran J=H^{1/2}(\Gamma)$, hence the operator $JJ^*$ is invertible. We set
\begin{eqnarray}
\cH:=(JJ^*)^{-1}.
\end{eqnarray}
Let us also recall that  the following Hardy type inequality holds true
\begin{eqnarray}
\int_\Gamma (Ju)^2\,d\sigma\leq c\calE_1[u],\ \forall\,u\in H^1(\Om).
\end{eqnarray}
and (see \cite{Adams,gilbarg})
\begin{eqnarray}
\ker(J)=H^1_0(\Omega)=\{u\in H^1(\Omega),u=0\ \mbox{on}\ \Gamma\}
\end{eqnarray}
We shall make extensive use of the following formulae, established in \cite[Lemma 2.3]{BBB}
\begin{eqnarray}
D_{\beta}=J^\ast (1+JJ^\ast)^{-1}JK_{1}=(JK_{1})^\ast (\frac{1}{\beta}+JJ^\ast)^{-1}(JK_{1}),\ \beta>0.
\label{resolvent}
\end{eqnarray}
and \cite[Lemma 2.4]{BBB}
\begin{eqnarray}
  D_{\infty}:=\lim_{\beta\to\infty}D_\beta=(\check{H}^{1/2}JK_1)^*\check{H}^{1/2}JK_1.
\label{Dinf}
\end{eqnarray}
Let $H_0^1(\Omega)^\perp$ be the $\calE_1$- orthogonal of $H_0^1(\Om)$ and $P$ be the $\calE_1$-orthogonal projection of $H^1(\Om)$ into $H_0^1(\Omega)^\perp$ . Then
$$
J|_{H_0^1(\Omega)^\perp}: H_0^1(\Omega)^\perp\rightarrow H^{1/2}(\Gamma)
$$
is an isomorphism. Its inverse operator, which we denote by $\mathcal{R}$ is given by
\begin{eqnarray}
\mathcal{R}:H^{1/2}(\Gamma)\to H^1(\Omega),\ \psi\mapsto Pv,\ Jv=\psi.
\end{eqnarray}
The operator $\mathcal{R}$ is well defined. Indeed, $\mathcal{R}u$ is the unique solution in $H^1(\Omega)$ of the boundary problem
\begin{eqnarray}
\left\{\begin{array}{ccccc}
-\Delta v+v & = & 0 & \mbox{in} & \Omega \\
v & = & \psi & \mbox{on} & \Gamma\\
\end{array}\right.
\end{eqnarray}
%
%
%
%
\begin{lem}
The operator $J$ is compact.
\label{compact}
\end{lem}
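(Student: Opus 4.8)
The plan is to factor the trace map through the boundary Sobolev space $H^{1/2}(\Gamma)$ and then exploit the compactness of the embedding $H^{1/2}(\Gamma)\hookrightarrow L^2(\Gamma)$. Recall from the preliminaries that $\mathrm{Ran}\,J=H^{1/2}(\Gamma)$ and that, by the trace theorem on the smooth domain $\Om$, the trace operator is bounded from $(H^1(\Om),\calE_1)$ into $H^{1/2}(\Gamma)$; denote this bounded operator by $\widetilde J$, so that $J=\iota\circ\widetilde J$, where $\iota\colon H^{1/2}(\Gamma)\hookrightarrow L^2(\Gamma)$ is the canonical inclusion. Since the composition of a bounded operator with a compact one is compact, it then suffices to prove that $\iota$ is compact.

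To establish compactness of $\iota$, I would use that $\Gamma$ is a compact $(d-1)$-dimensional $C^\infty$ submanifold of $\R^d$: fix a finite atlas together with a subordinate partition of unity, express the $H^{1/2}(\Gamma)$ norm (up to equivalence) as a finite sum of local $H^{1/2}$ norms on bounded open subsets $U\subset\R^{d-1}$, and invoke the Rellich--Kondrachov theorem, by which $H^{1/2}(U)\hookrightarrow L^2(U)$ is compact for bounded $U$. Reassembling the local statements via the partition of unity — i.e. showing that a sequence bounded in $H^{1/2}(\Gamma)$ admits, after a diagonal extraction, localizations that all converge in $L^2$ — yields compactness of $\iota$, hence of $J$. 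The only mildly technical point is the bookkeeping in local coordinates (equivalence of the intrinsic norm with the chart-wise norms), which is routine given the smoothness of $\Gamma$; alternatively one may realize $H^{1/2}(\Gamma)$ as the range of $(1-\Delta_\Gamma)^{-1/4}$ and note that this operator is compact on $L^2(\Gamma)$, bypassing charts altogether.

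An alternative route, avoiding boundary Sobolev spaces, rests on the multiplicative trace inequality $\|Ju\|_{L^2(\Gamma)}^2\le c\,\|u\|_{L^2(\Om)}\,\calE_1[u]^{1/2}$ for all $u\in H^1(\Om)$, which follows by integrating $\mathrm{div}(u^2 V)$ over $\Om$ against a smooth vector field $V$ with $V\cdot\nu\geq 1$ on $\Gamma$ and estimating by Cauchy--Schwarz. Given a sequence $(u_n)$ bounded in $(H^1(\Om),\calE_1)$, the Rellich theorem $H^1(\Om)\hookrightarrow L^2(\Om)$ furnishes a subsequence converging in $L^2(\Om)$; applying the trace inequality to the differences shows that $(Ju_n)$ is Cauchy, hence convergent, in $L^2(\Gamma)$, which is exactly the assertion. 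In either approach the substantive ingredient is a Rellich-type compactness statement — on $\Gamma$ in the first argument, on $\Om$ in the second — and I do not anticipate any genuine obstacle beyond that.
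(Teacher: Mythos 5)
Your proposal is correct, but it follows a different path from the paper, which explicitly states that it is giving an ``alternative proof'' to the standard one you reconstruct. Both of your arguments are sound: the first factors $J$ as $\iota\circ\widetilde J$ through $H^{1/2}(\Gamma)$ and reduces matters to the compact embedding $H^{1/2}(\Gamma)\hookrightarrow L^2(\Gamma)$ on the compact boundary manifold; the second uses the multiplicative trace inequality
$\|Ju\|_{L^2(\Gamma)}^2\le c\,\|u\|_{L^2(\Om)}\calE_1[u]^{1/2}$
(obtained from the divergence theorem applied to $u^2V$ with $V\cdot\nu\ge 1$ on $\Gamma$) combined with Rellich compactness of $H^1(\Om)\hookrightarrow L^2(\Om)$, applied to differences of a subsequence. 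The paper argues differently: it records the Ahlfors-type regularity $\sigma(B_r(x)\cap\Gamma)\sim r^{d-1}$ and the trace embedding inequality (\ref{trace-emb}) into $L^p(\Gamma)$ for some $p>2$, and then invokes the abstract compactness criterion of \cite[Theorem 4.1]{benamor07}, which turns this strict gain of integrability, together with the regularity of the measure $\sigma$, into compactness of $J$ in $L^2(\Gamma,\sigma)$. The paper's route does not require identifying the boundary Sobolev scale $H^{1/2}(\Gamma)$ at all and therefore extends to much rougher sets and measures where that scale is unavailable, whereas your first argument leans on the smooth manifold structure of $\Gamma$ and your second, while elegantly elementary and self-contained in this smooth setting, is still tied to the existence of a nice vector field $V$ transverse to the boundary.
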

Although the result is known we shall give an alternative proof.
\begin{proof}
Owing to the regularity of  $\Om$ and precisely to the fact that
\begin{eqnarray}
\sigma(B_r(x)\cap\Gamma)\sim r^{d-1},\ \forall\,x\in\Gamma,\ 0<r<1,
\end{eqnarray}
the following known (see \cite[Theorem 5.36, p.164]{adams-fournier}) trace inequality holds true: For $d\geq 3,\ 2\leq p<\frac{2(d-1)}{d-2}$, there is a constant $c$ such that
\begin{eqnarray}
\big(\int_\Gamma |Ju|^p\,d\sigma\big)^{2/p}\leq c\calE_1[u],\ \forall\,u\in H^1(\Om),
\label{trace-emb}
\end{eqnarray}
whereas the latter inequality holds true for every $2\leq p<\infty$, for  $d=2$.\\
Now the compactness of $J$ follows from \cite[Theorem 4.1]{benamor07}.

\end{proof}
Of major importance for our method is the operator $JJ^*$, for which we list the relevant properties and give its precise description.\\
As $Ran(J)$ is dense in $L^2(\Gamma)$, the operator $JJ^*$ is an invertible nonnegative selfadjoint operator in $L^2(\Gamma)$. Set
\begin{eqnarray}
\check{H}:=(JJ^*)^{-1}.
\end{eqnarray}
Then $\check{H}$ is a nonnegative selfadjoint operator in $L^2(\Gamma)$ as well and has, by Lemma \ref{compact}, a compact resolvent.\\
In general it is hard to give a clear description of the domain of $\cH$. To overcome this difficulty  we shall  associate to $\cH$ a bilinear symmetric form, which domain is well known as well as its features.\\
Let us introduce the quadratic form $\check{{\cal E}}_1$ in $L^2(\Gamma)$, as follows:
\begin{eqnarray}
D(\check{{\cal E}}_1) = Ran(J),\ \check{{\cal E}}_1(Ju,Jv)= {\cal E}_1(Pu,Pv )~~\forall u,v\in H^1(\Om).
\end{eqnarray}
%
The operator  $\check{{\cal E}}_1$ is called the trace of the Dirichlet form ${\cal E}_1$ with respect to the measure $\sigma$ (see \cite[Chap. 6]{fukushima}). It is also called the Dirichlet--to--Neumann operator and was studied by many authors. For instance we refer the reader to \cite{fukushima-chen,Arendt,arendt-elster,daners,auchmuty} and references therein.\\
According to \cite[Theorem 1.1]{BBBMath}, the quadratic form $\ccalE_1$ is closed and is associated, in the sense of Kato's representation theorem,  to the selfadjoint operator $\check{H}^{-1}$. In this special context we shall give much accurate description of the operator $\check{H}$.
\begin{prop}
\begin{enumerate}
\item Let $\psi\in H^{1/2}(\Gamma)$ and  $u\in H^1(\Om)$ be the unique solution of the boundary value problem
\begin{eqnarray}
\left\{\begin{array}{ccccc}
-\Delta u+u & = & 0 & \mbox{in} & \Omega \\
u & = & \psi & \mbox{on} & \Gamma\\
\end{array}\right.
\end{eqnarray}
Then $\ccalE_1[\psi]=\calE_1[u]$. Furthermore for every $\psi\in D(\check{H})$, $\check{H}\psi=\frac{\partial u}{\partial\nu}$.
\item (Dirichlet principle). For every $\psi\in H^{1/2}(\Gamma)$, we have
\begin{eqnarray}
\ccalE_1[\psi]=\inf\big\{\calE_1[v]\colon\ v\in H^1(\Om),\ Jv=\psi\big\}.
\label{DP}
\end{eqnarray}
It follows that $\ccalE_1$ is a Dirichlet form.
\item For every $u\in L^2(\Gamma)$, set $U_1^\sigma u$ the $1$-potential of the signed  measure $u\sigma$. Then $\check{H}^{-1}u=JU_1^\sigma u$.
\item Let $G_\Om$ be the Neumann function of $-\Delta +1$ on $\Om$, i.e. the function $G:\Omega\times\Omega\to [0,\infty]$ satisfying
\begin{eqnarray}
\left\{\begin{array}{ccccc}
-\Delta_y G(\cdot,y)+G(\cdot,y) & = & \delta_{\cdot}(y) & \mbox{on} & \Omega \\
\frac{\partial G(\cdot,y)}{\partial\nu} & = & 0 & \mbox{on} & \Gamma\\
\end{array}\right.
\end{eqnarray}
Define the operator
\begin{eqnarray}
K_\Om^\sigma:L^2(\Gamma)\to L^2(\Gamma),\ \psi\mapsto \int_\Gamma G_\Om(\cdot,y)\psi(y)\,d\sigma(y).
\end{eqnarray}
\end{enumerate}
Then $\check{H}^{-1}=K_\Om^\sigma$.
\label{description}
\end{prop}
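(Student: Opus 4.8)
The plan is to establish the four assertions in sequence, each one building on the preceding machinery about the operators $J$, $P$, $\mathcal{R}$ and the trace form $\ccalE_1$. For item (1), I would start from the characterization $\ccalE_1(Ju,Jv)=\calE_1(Pu,Pv)$ and the fact, recalled before the proposition, that $\mathcal R\psi=Pv$ where $Jv=\psi$ solves exactly the boundary value problem $-\Delta u+u=0$ in $\Om$, $u=\psi$ on $\Gamma$. Since $Pu\in H_0^1(\Om)^\perp$ already satisfies $-\Delta w+w=0$ weakly, the unique $H^1$-solution $u$ of the stated problem is precisely $\mathcal R\psi=Pu$, whence $\ccalE_1[\psi]=\calE_1(Pu,Pu)=\calE_1[u]$. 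For the identity $\check H\psi=\partial u/\partial\nu$ when $\psi\in D(\check H)$, I would use that $\check H^{-1}$ is the operator associated with $\ccalE_1$ (by \cite[Theorem 1.1]{BBBMath}), compute $\ccalE_1(\psi,J\varphi)=\calE_1(u,P\varphi)=\calE_1(u,\varphi)$ for arbitrary $\varphi\in H^1(\Om)$ (the $H_0^1$-part drops out because $u$ is harmonic for $-\Delta+1$), and then integrate by parts: $\calE_1(u,\varphi)=\int_\Om(-\Delta u+u)\varphi\,dx+\int_\Gamma\frac{\partial u}{\partial\nu}J\varphi\,d\sigma=\int_\Gamma\frac{\partial u}{\partial\nu}J\varphi\,d\sigma$. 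Comparing with $\ccalE_1(\psi,J\varphi)=(\check H\psi,J\varphi)_{L^2(\Gamma)}$ and using density of $Ran(J)$ in $L^2(\Gamma)$ gives $\check H\psi=\partial u/\partial\nu$.

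For item (2), the Dirichlet principle, I would invoke the general variational description of the trace form, or argue directly: the $\calE_1$-orthogonal decomposition $v=Pv+(I-P)v$ with $(I-P)v\in H_0^1(\Om)$ and $Jv=\psi$ forces $J(Pv)=\psi$, and $\calE_1[v]=\calE_1[Pv]+\calE_1[(I-P)v]\geq\calE_1[Pv]=\ccalE_1[\psi]$, with equality exactly when $(I-P)v=0$. This both proves \eqref{DP} and exhibits the minimizer. That $\ccalE_1$ is a Dirichlet form then follows because the infimum of the Dirichlet form $\calE_1$ over a lattice-stable constraint set (normal contractions act compatibly with the trace) preserves the Markov property — alternatively one cites that the trace of a Dirichlet form is again a Dirichlet form, as in \cite[Chap. 6]{fukushima}.

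For item (3), I would compute $J^*$ explicitly: for $u\in L^2(\Gamma)$ and $v\in H^1(\Om)$, $(J^*u,v)_{\calE_1}=(u,Jv)_{L^2(\Gamma)}=\int_\Gamma u\,Jv\,d\sigma$, which is precisely the defining (weak) relation for the $1$-potential $U_1^\sigma u$ of the measure $u\,\sigma$, so $J^*u=U_1^\sigma u$ as an element of $H^1(\Om)$. Hence $\check H^{-1}u=JJ^*u=JU_1^\sigma u$. For item (4), I would identify $U_1^\sigma u$ with $\int_\Om G_\Om(\cdot,y)u(y)\,d\sigma(y)$ using that $G_\Om$ is the kernel of $K_1=(-\Delta_N+1)^{-1}$ (the Neumann resolvent), so that $U_1^\sigma u(x)=\int_\Gamma G_\Om(x,y)u(y)\,d\sigma(y)$; taking the trace $J$ and evaluating on $\Gamma$ yields exactly $K_\Om^\sigma u$, so $\check H^{-1}=K_\Om^\sigma$.

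The main obstacle I anticipate is the rigorous justification of the boundary integration by parts and the identification of the normal derivative in item (1): one must know $u\in H^2(\Om)$ (or at least that $\partial u/\partial\nu$ exists in $L^2(\Gamma)$) precisely on the domain $D(\check H)$, and one must be careful that Green's formula $\calE_1(u,\varphi)=\int_\Gamma\frac{\partial u}{\partial\nu}J\varphi\,d\sigma$ is valid for all $\varphi\in H^1(\Om)$ with the harmonic $u$ — this requires the smoothness of $\Gamma$ and elliptic regularity up to the boundary. The other slightly delicate point is pinning down the exact meaning of the $1$-potential $U_1^\sigma u$ for a surface-supported measure and checking that it lands in $H^1(\Om)$ (rather than merely some larger space); here the Hardy-type trace inequality recalled in the preliminaries, which says $\sigma$ has finite $\calE_1$-energy in the appropriate sense, is exactly what guarantees $U_1^\sigma$ is well defined and $H^1$-valued, so I would lean on that.
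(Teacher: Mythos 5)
Your proposal is correct and follows essentially the same strategy as the paper's (terse) proof: identify the solution of the boundary value problem with $\mathcal R\psi=P(\cdot)$, exploit the defining relation $\ccalE_1(Ju,Jv)=\calE_1(Pu,Pv)$ together with Green's formula for item 1, use the $\calE_1$-orthogonal splitting $H^1(\Om)=H_0^1(\Om)\oplus H_0^1(\Om)^\perp$ for item 2, identify $J^*$ with the $1$-potential operator for item 3, and invoke the Neumann resolvent kernel for item 4. Your Pythagorean version of the Dirichlet principle ($\calE_1[v]=\calE_1[Pv]+\calE_1[(I-P)v]\geq\calE_1[Pv]$) is slightly sharper than the paper's appeal to existence of a minimizer plus continuity of $J$, since it exhibits the minimizer directly and makes the lower bound transparent, but it is the same underlying decomposition. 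One small point of care you navigate correctly: the paper's intervening remark that $\ccalE_1$ is ``associated to $\check H^{-1}$'' must be read as associated to $\check H$ (as the subsequent use $\ccalE_1(\check H^{-1}\psi,\phi)=(\psi,\phi)_{L^2(\Gamma)}$ in the paper's own proof of item 3 confirms), and your computation $\ccalE_1(\psi,J\varphi)=(\check H\psi,J\varphi)_{L^2(\Gamma)}$ uses the correct convention.
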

\begin{proof}
Assertion 1. follows from the very construction of $\ccalE_1$ and the use of Green's formula.\\
2. Clearly the left-hand-side of (\ref{DP}) is bigger than its right-hand-side. The reversed inequality follows from the existence of a minimizer together with the continuity of $J$.\\
3. Let us first observe that for every fixed $u\in H^1(\Om)$ the signed measure $u\sigma$ has finite energy integral, i.e.
\begin{eqnarray}
|\int_\Gamma vu\,d\sigma|\leq c(\calE_1[v])^{1/2},\ \forall\,v\in H^1(\Om).
\end{eqnarray}
Thus the $1$-potential of $u\sigma$ is well defined and is characterized as being the unique element of $H^1(\Om)$ such that
\begin{eqnarray}
\calE_1(U_1^\sigma u,v)=\int_\Gamma JuJv\,d\sigma,\ \forall\,v\in H^1(\Om).
\end{eqnarray}
Hence
\begin{eqnarray}
\ccalE_1(JU_1^\sigma,Jv)&=&\calE_1(U_1^\sigma u,Jv)=\int_\Gamma JuJv\,d\sigma\nonumber\\
&=&\ccalE_1(\cH^{-1}Ju,Jv), \forall\,v\in H^1(\Om).
\end{eqnarray}
4: Follows from the fact that
\begin{eqnarray}
(-\Delta_N +1)^{-1}u= \int_\Om G_\Om(\cdot,y)u(y)\,dy,\ \forall\,u\in H^1(\Om),
\end{eqnarray}
and the identity $\cH^{-1}=JJ^*$.
\end{proof}
Henceforth we denote by $e^{-tT},\ t>0$, respectively  $\check{T_t},\ t>0$, the semigroup associated to $-\Delta_N +1$, respectively to $\cH$.
\begin{rk}
{\rm
From potential theoretical results relating properties of $(T_t)_{t>0}$ to those of $(\check{T_t})_{t>0}$, we conclude on the light of the latter Proposition that  $(\check{T_t})_{t>0}$ is Markovian and transient, however it is not  conservative, i.e.,
\begin{eqnarray}
0\leq \check{T_t}1\neq 1,\ \forall\,t>0
\end{eqnarray}

}
\end{rk}

\section{Uniform and trace class convergence}
%
In this section we shall concentrate on various types of convergence of $D_\beta$ to $D_\infty$ as well as their rates. These types are precisely convergence with respect to the operator norm and the norms of Schatten--von Neumann ideals.\\
Let us first quote that $\lim_{\beta\to\infty}\|D_\beta -D_\infty\|=0$. Indeed, we already mentioned that $D_\beta$ increases strongly to $D_\infty$ which is compact. Thus using \cite[Theorem 2.6]{BBB} we get uniform convergence.
\begin{theo} The operator $\check{H}JK_1$ is bounded. Consequently $(H_\beta+1)^{-1}$ converges in the operator norm to $(-\Delta_D+1)^{-1}$ with maximal rate proportional to $\beta^{-1}$. Moreover,
\begin{eqnarray}
\lim_{\beta\to\infty}\beta\|D_\beta - D_\infty\|=\|\cH JK_1\|^2.
\label{rateU}
\end{eqnarray}
\label{UniConv}
\end{theo}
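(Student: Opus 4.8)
The plan is to work from the two representation formulas \eqref{resolvent} and \eqref{Dinf}. Write $A_\beta := (\frac1\beta + JJ^*)^{-1}$ and $A_\infty := (JJ^*)^{-1} = \cH$. From \eqref{resolvent} we have $D_\beta = (JK_1)^* A_\beta (JK_1)$, and from \eqref{Dinf}, $D_\infty = (JK_1)^* A_\infty (JK_1)$ once we note that $(\cH^{1/2}JK_1)^*\cH^{1/2}JK_1 = (JK_1)^*\cH(JK_1)$. Hence
\begin{eqnarray}
D_\beta - D_\infty = (JK_1)^* \big(A_\beta - A_\infty\big)(JK_1).
\end{eqnarray}
The first task is to show $\cH JK_1 = A_\infty JK_1$ is bounded; this is essentially the content of the operator $\cH^{1/2}JK_1$ being bounded (guaranteed by the existence of $D_\infty$ in \eqref{Dinf}) improved by one more power of $\cH$. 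I would argue this via the smoothing properties of $K_1 = (-\Delta_N+1)^{-1}$: $K_1$ maps $L^2(\Om)$ into $H^2(\Om)$, the trace $J$ then maps into $H^{3/2}(\Gamma)$, and $\cH$, being (by Proposition \ref{description}) the Dirichlet-to-Neumann operator, is a first-order pseudodifferential operator on $\Gamma$ that maps $H^{3/2}(\Gamma)$ into $H^{1/2}(\Gamma)\subset L^2(\Gamma)$. This gives boundedness of $\cH JK_1$ from $L^2(\Om)$ to $L^2(\Gamma)$, hence boundedness of $(\cH JK_1)^*\cH JK_1$ on $L^2(\Om)$; alternatively one invokes $\mathcal R$ and the characterization in Proposition \ref{description}(1) directly.

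Next, the algebraic identity for the resolvent-type difference: using $A_\beta - A_\infty = A_\beta\big((JJ^*) - (\frac1\beta + JJ^*)\big)A_\infty = -\frac1\beta A_\beta A_\infty$, so
\begin{eqnarray}
D_\beta - D_\infty = -\frac1\beta (JK_1)^* A_\beta \cH (JK_1) = -\frac1\beta (JK_1)^* (\tfrac1\beta + JJ^*)^{-1}\cH (JK_1).
\end{eqnarray}
Therefore $\beta(D_\infty - D_\beta) = (JK_1)^*(\frac1\beta+JJ^*)^{-1}\cH(JK_1)$. Since $(\frac1\beta+JJ^*)^{-1}$ is selfadjoint, I would like to write this as $\big((\frac1\beta+JJ^*)^{-1/2}\cH^{1/2}JK_1\big)^*\big((\frac1\beta+JJ^*)^{-1/2}\cH^{1/2}JK_1\big)$ after checking that $(\frac1\beta+JJ^*)^{-1}$ and $\cH$ commute — which they do, both being functions of $JJ^*$. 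So
\begin{eqnarray}
\beta\|D_\beta - D_\infty\| = \big\|(\tfrac1\beta+JJ^*)^{-1/2}\cH^{1/2}JK_1\big\|^2.
\end{eqnarray}
Now let $\beta\to\infty$: $(\frac1\beta+JJ^*)^{-1/2}$ increases (in the form sense, as a function of $JJ^*$) to $\cH^{1/2}$, so $(\frac1\beta+JJ^*)^{-1/2}\cH^{1/2}JK_1$ converges — I expect strongly, and monotonically in norm — to $\cH JK_1$. Monotonicity of the norms plus a lower-semicontinuity argument (or direct application of \cite[Theorem 2.6]{BBB} in the same spirit as the uniform convergence argument preceding the theorem) gives $\lim_\beta \beta\|D_\beta - D_\infty\| = \|\cH JK_1\|^2$, which is \eqref{rateU}. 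The bound $\beta\|D_\beta - D_\infty\|\le \|\cH JK_1\|^2$ for all $\beta$ (from the monotonicity) is exactly the assertion that the rate is "maximal proportional to $\beta^{-1}$."

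The main obstacle I anticipate is the boundedness of $\cH JK_1$: the factor $\cH$ alone is unbounded on $L^2(\Gamma)$ (it has compact resolvent, hence unbounded spectrum), so one genuinely needs the extra $H^{3/2}$-regularity coming from the composition $JK_1$ and must quote an appropriate elliptic regularity / trace theorem for the smooth domain $\Om$, or phrase the whole argument intrinsically through $\mathcal R$ and $\ccalE_1$ to avoid pseudodifferential machinery. A secondary, more technical point is justifying the passage to the limit in the operator norm of $(\frac1\beta+JJ^*)^{-1/2}\cH^{1/2}JK_1$ rather than merely strongly; here the monotone increase of the operators (as functions of the single operator $JJ^*$) together with the already-established boundedness of the limit $\cH JK_1$ should let one conclude, again via the monotone-convergence-to-compact-limit reasoning used for the plain uniform convergence of $D_\beta$.
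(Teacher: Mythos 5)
Your proposal is correct, and it reaches the result by a route that is recognisably different from the paper's in both halves of the argument. For the boundedness of $\cH JK_1$, you argue through the Sobolev mapping chain $K_1:L^2(\Om)\to H^2(\Om)$, $J:H^2(\Om)\to H^{3/2}(\Gamma)$, and $\cH:H^{3/2}(\Gamma)\to H^{1/2}(\Gamma)$ using the fact that the Dirichlet--to--Neumann map on a smooth closed surface is a first-order (pseudodifferential) operator; the paper instead computes $\check{\mathcal E}_1(JK_1u,Jv)$ via Green's formula to identify $\check H JK_1u=\tfrac{\partial PK_1u}{\partial\nu}$ directly and then invokes the closed graph theorem, which has the advantage of producing an explicit formula that is reused later (Lemma~\ref{cor2.1} and Proposition~\ref{dynkin}). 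For the rate, you unfold the resolvent algebra $\beta(D_\infty-D_\beta)=(\cH^{1/2}JK_1)^*(\tfrac1\beta+JJ^*)^{-1}(\cH^{1/2}JK_1)$ and pass to the limit by monotonicity of the functional calculus in $JJ^*$; the paper, at the point of this theorem, simply cites the abstract \cite[Theorem 2.7]{BBB} as a black box, and only carries out that same algebraic telescoping later (in the proof of the first-order resolvent expansion). Your route is thus more self-contained. Two small cautions: the identity $A_\beta-A_\infty=-\tfrac1\beta A_\beta A_\infty$ is written with the unbounded $A_\infty=\cH$, so strictly speaking one should sandwich through $\cH^{1/2}$ as in your next display to keep everything rigorous; and the final limit $\|(\tfrac1\beta+JJ^*)^{-1/2}\cH^{1/2}JK_1\|\uparrow\|\cH JK_1\|$ follows from the pointwise monotone increase of the quadratic forms plus taking a supremum over unit vectors --- compactness of $\cH JK_1$ (which is not established) is not needed, so invoke monotonicity alone rather than the monotone-convergence-to-compact-limit lemma.
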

\begin{proof}
Let  $u\in H^2(\Omega)$. We claim that  ${P}u\in H^2(\Omega)$. Indeed, $J{P}u=Ju$. Therefore
${P}u$ is the unique solution in $H^1(\Omega)$ of the boundary problem:
\begin{eqnarray}
\left\{\begin{array}{ccccc}
-\Delta v+v & = & 0 & \mbox{in} & \Omega \\
v & = & u & \mbox{on} & \Gamma\\
\end{array}\right.
\end{eqnarray}
From elliptic regularity (see \cite[Theorem 8.13]{gilbarg}), we  get that $Pu\in H^2(\Omega)$ and the claim is proved.\\
Let $u\in L^2(\Omega)$ and $v\in H^1(\Omega)$. Then
\begin{eqnarray}
\check{\mathcal{E}}_1(JK_1u,Jv)) &=& \mathcal{E}_1(PK_1u,Pv)\nonumber\\
& = & \int_\Omega (\nabla
PK_1u){\nabla Pv}
+\int_{\Omega} (PK_1u) {Pv}.
\end{eqnarray}
As $K_1 u\in H^2(\Omega)$ then  $PK_1u\in H^2(\Omega)$ as well. Thus by  Green's formula one obtain
\begin{eqnarray}
\begin{array}{ccccc}\check{\mathcal{E}}_1(JK_1u,Jv)) & =  &\ds -\int_\Omega \Delta PK_1u {Pv} &+&%
\ds\int_{\Gamma}\frac{\partial PK_1u}{\partial\nu}{Pv} +\int_\Omega PK_1u{Pv }\\
& = & \ds\int_{\Gamma}\frac{\partial PK_1u}{\partial\nu}{v} &=&\ds (\frac{\partial PK_1u}{\partial\nu},Jv)_{L^2(\Gamma)}.
\end{array}
\end{eqnarray}
It follows that $JK_1u\in D(\check{H})$ and
$\check{H}JK_1u=\frac{\partial PK_1u}{\partial\nu}$. Thus $\check{H}JK_1$ is a closed everywhere defined operator on $L^2(\Omega)$ and hence is bounded.\\
Finally utilizing  \cite[Theorem 2.7]{BBB}, we conclude that $(H_{\beta}+1)^{-1}$
converges uniformly to $(-\Delta_D+1)^{-1}$ with maximal rate proportional to $\frac{1}{\beta}$ and that formula (\ref{rateU}) holds true.
\end{proof}
\begin{rk}{\rm
Here we observe a qualitative difference between inner superconductivity and boundary superconductivity: Whereas in our setting uniform convergence is as fast as possible, it occurs for $-\Delta +\beta 1_{\Om_1}$, where $\Omega$ is open and $\overline\Omega\subset\Om$, with a rate which is $O(\beta^{-1/2})$, according to \cite{carbou,agbanusi}.
}
\end{rk}
For further investigations concerning  convergence of resolvent differences as well as spectral asymptotic one needs strengthened regularizing properties of the operator $JK_1$. To that end we establish:
\begin{lem}\label{cor2.1}
The operator $\check{H}^{3/2}JK_1$ is bounded.
\end{lem}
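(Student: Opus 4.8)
The plan is to bootstrap the argument already used in Theorem \ref{UniConv}, where it was shown that for $u\in L^2(\Om)$ one has $JK_1u\in D(\cH)$ with $\cH JK_1u=\frac{\partial PK_1u}{\partial\nu}$. To reach $\cH^{3/2}$ I would first establish that $\cH JK_1$ maps $L^2(\Om)$ into $D(\cH^{1/2})$ and is bounded into that space, and then conclude via a closed-graph / everywhere-defined argument, exactly as in the previous proof. Recall from Proposition \ref{description} (assertion 1 and the closedness of $\ccalE_1$) that $D(\cH^{1/2})=D(\ccalE_1)=\operatorname{Ran}(J)=H^{1/2}(\Gamma)$, with $\|\cH^{1/2}\varphi\|_{L^2(\Gamma)}^2=\ccalE_1[\varphi]$. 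Hence it suffices to show that $\varphi:=\cH JK_1u=\frac{\partial PK_1u}{\partial\nu}$ lies in $H^{1/2}(\Gamma)$ and that $\|\varphi\|_{H^{1/2}(\Gamma)}\le c\|u\|_{L^2(\Om)}$.

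First I would invoke elliptic regularity more strongly than before. Since $u\in L^2(\Om)$, we have $K_1u=(-\Delta_N+1)^{-1}u\in H^2(\Om)$ with $\|K_1u\|_{H^2(\Om)}\le c\|u\|_{L^2(\Om)}$. As observed in Theorem \ref{UniConv}, $PK_1u$ is the unique $H^1$-solution of $-\Delta v+v=0$ in $\Om$, $v=K_1u$ on $\Gamma$; by the trace theorem $K_1u|_\Gamma\in H^{3/2}(\Gamma)$, so elliptic regularity for the Dirichlet problem (\cite[Theorem 8.13]{gilbarg} together with boundary regularity up to $H^2$, or the standard $H^s$ theory) gives $PK_1u\in H^2(\Om)$ with $\|PK_1u\|_{H^2(\Om)}\le c\|K_1u\|_{H^{3/2}(\Gamma)}\le c\|u\|_{L^2(\Om)}$. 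The normal-derivative trace map $H^2(\Om)\to H^{1/2}(\Gamma)$, $w\mapsto \frac{\partial w}{\partial\nu}$, is bounded (again by the trace theorem on the smooth boundary $\Gamma$), so $\varphi=\frac{\partial PK_1u}{\partial\nu}\in H^{1/2}(\Gamma)$ with $\|\varphi\|_{H^{1/2}(\Gamma)}\le c\|u\|_{L^2(\Om)}$.

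Combining these, $\varphi\in H^{1/2}(\Gamma)=D(\cH^{1/2})$ and $\|\cH^{1/2}\varphi\|_{L^2(\Gamma)}^2=\ccalE_1[\varphi]\le c\|\varphi\|_{H^{1/2}(\Gamma)}^2\le c\|u\|_{L^2(\Om)}^2$, where the middle inequality uses that $\ccalE_1$ is equivalent to the $H^{1/2}(\Gamma)$-norm (it is a closed form with form domain $H^{1/2}(\Gamma)$, so the graph norms coincide up to constants, by the closed graph theorem). Therefore $\cH^{3/2}JK_1u=\cH^{1/2}\varphi$ is defined for every $u\in L^2(\Om)$ and satisfies $\|\cH^{3/2}JK_1u\|_{L^2(\Gamma)}\le c\|u\|_{L^2(\Om)}$; since $\cH^{3/2}JK_1$ is everywhere defined and closed (as a composition involving the closed operator $\cH^{3/2}$ with the bounded operator $\cH JK_1$ into $D(\cH^{1/2})$), it is bounded.

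The main obstacle is the boundary elliptic regularity step: one needs the Dirichlet problem with data in $H^{3/2}(\Gamma)$ to produce an $H^2(\Om)$ solution with the quantitative estimate, and one needs the normal-derivative trace $H^2(\Om)\to H^{1/2}(\Gamma)$ to be bounded — both are standard on smooth bounded domains but must be cited carefully. A secondary point is the identification $D(\cH^{1/2})=H^{1/2}(\Gamma)$ with norm equivalence, which is precisely what \cite[Theorem 1.1]{BBBMath} and Proposition \ref{description} provide; everything else is the same closed/everywhere-defined bootstrap already executed in Theorem \ref{UniConv}.
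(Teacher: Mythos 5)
Your proposal follows essentially the same route as the paper: identify $\check{H}JK_1u=\frac{\partial PK_1u}{\partial\nu}$, use elliptic regularity to place this in $H^{1/2}(\Gamma)=D(\check{H}^{1/2})$, and conclude by the closed-graph argument. You have simply filled in the quantitative details (trace theorem, $H^2$ regularity of $PK_1u$, boundedness of the normal-derivative trace) that the paper compresses into the phrase ``by elliptic regularity.''
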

\begin{proof} Let $u\in L^2(\Omega)$. We have already proved that
$\ds\check{H}JK_1u=\frac{\partial PK_1u}{\partial\nu}$, which by elliptic regularity lies in the space $
H^{1/2}(\Gamma)=D(\check{H}^{1/2})$.\\
Thus $\check{H}^{3/2}JK_1$ is a closed everywhere defined operator on $L^2(\Omega)$ and  then it is bounded.
\end{proof}
Before dealing with convergence within Schatten--von Neumann operator ideals, let us introduce few notations.\\
Let $1\leq p<\infty$ and ${\cal{H}}_i$ be Hilbert spaces, $i=1,2$. Let $K:{\cal{H}}_1\to {\cal{H}}_2$ be a compact operator. Then ${\cal{H}}_2$ has an orthonormal basis $(e_i)_{i\in I}$ such that, with $|K|:=\sqrt{KK^*}$, we have
$$
|K|e_i=\lambda_i e_i,\ \forall\,i\in I,
$$
for some suitably chosen family $(\lambda_i)_{i\in I}\subset [0,\infty)$, which is unique up to permutations. We set
$$
 \|K\|_{S_p}:=\big(\sum_{i\in I}\lambda_i^p\big)^{1/p}.
$$
The ideal $S_p({\cal{H}}_1,{\cal{H}}_2)$, ($S_p$ for short) denotes the set of compact operators from ${\cal{H}}_1$ to ${\cal{H}}_2$ such that $\|K\|_p<\infty$. It is called the Schatten--von Neumann class of order $p$.

On the light of Lemma \ref{cor2.1}, we are able to prove convergence with respect to the $S_p$ norm.
\begin{theo}
For every  $p>\frac{d-1}{2}$ we have
\begin{eqnarray}
\lim_{\beta\to\infty}\|D_\beta-D_\infty\|_{S_p}=0.
\end{eqnarray}
In particular trace class convergence  holds true for $d=2$.
\end{theo}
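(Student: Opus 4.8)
The plan is to exploit the factorization $D_\infty = (\cH^{1/2}JK_1)^*\cH^{1/2}JK_1$ from (\ref{Dinf}) together with the formula for $D_\beta$ in (\ref{resolvent}), and to reduce the whole problem to membership of $\cH^{1/2}JK_1$ in an appropriate Schatten class. First I would note that since $\cH^{3/2}JK_1$ is bounded by Lemma \ref{cor2.1}, one can write $\cH^{1/2}JK_1 = \cH^{-1}\cdot(\cH^{3/2}JK_1)$, so $\cH^{1/2}JK_1$ belongs to $S_{2p}$ as soon as $\cH^{-1} = JJ^*$ (equivalently $\cH$ has the right eigenvalue decay) belongs to $S_{2p}$. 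Since $D_\infty$ is a product of an $S_{2p}$ operator with its adjoint, $D_\infty \in S_p$; and similarly, using $\bigl(\tfrac1\beta + JJ^*\bigr)^{-1}(JK_1) = \bigl(\tfrac1\beta + JJ^*\bigr)^{-1}\cH^{-1}\cdot(\cH JK_1)$ with $\cH JK_1$ bounded and $\bigl(\tfrac1\beta+JJ^*\bigr)^{-1}\cH^{-1} = \bigl(\tfrac1\beta+JJ^*\bigr)^{-1}JJ^*$ uniformly bounded in $\beta$, one gets $D_\beta\in S_p$ uniformly. Then $\|D_\beta - D_\infty\|_{S_p}\to 0$ would follow from the strong convergence $D_\beta\to D_\infty$ combined with a domination/equi-integrability argument for Schatten norms (monotone increase of $D_\beta$ to $D_\infty$ with $D_\infty\in S_p$ gives this by a standard dominated-convergence theorem in $S_p$, e.g. via $\|D_\beta - D_\infty\|_{S_p}^p \le \operatorname{tr}|D_\infty - D_\beta|^p$ and monotone convergence on the singular values, since $0\le D_\beta\le D_\infty$).

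The key quantitative input I would isolate is the eigenvalue asymptotic of $\cH = (JJ^*)^{-1}$, i.e. of the Dirichlet-to-Neumann operator on the smooth hypersurface $\Gamma$. By Proposition \ref{description} this operator is, modulo lower order, a first-order elliptic pseudodifferential operator on the compact $(d-1)$-manifold $\Gamma$, so by Weyl asymptotics its $n$-th eigenvalue $\mu_n(\cH)$ grows like $n^{1/(d-1)}$; equivalently the singular values $\lambda_n$ of $JJ^* = \cH^{-1}$ decay like $n^{-1/(d-1)}$. Hence $JJ^*\in S_q$ exactly for $q > d-1$. I would phrase this either by invoking the sharp form of the trace embedding (\ref{trace-emb}) / the results of \cite{benamor07} already cited for Lemma \ref{compact}, or directly by the known spectral asymptotics of the Dirichlet-to-Neumann operator (Sandgren/Agranovich-type results). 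With $JJ^*\in S_q$ for all $q>d-1$, the factor $\cH^{-1} = JJ^*$ contributes, so $\cH^{1/2}JK_1 = (JJ^*)(\cH^{3/2}JK_1)\in S_q$ for $q>d-1$, and therefore $D_\infty = (\cH^{1/2}JK_1)^*\cH^{1/2}JK_1 \in S_p$ for $p = q/2 > (d-1)/2$. The special case $d=2$ gives $p>1/2$, hence in particular $p=1$, which is trace class convergence.

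The main obstacle, and the step deserving the most care, is the passage from "$D_\beta, D_\infty\in S_p$ uniformly" plus "$D_\beta\to D_\infty$ strongly" to "$\|D_\beta-D_\infty\|_{S_p}\to 0$": strong convergence alone never implies Schatten-norm convergence, so one genuinely needs the monotonicity $0\le D_\beta\uparrow D_\infty$ (which holds by Kato's monotone convergence and was noted right before Theorem \ref{UniConv}). The clean way is: the singular values of $D_\infty - D_\beta$ are dominated, and since $D_\beta\uparrow D_\infty$ strongly with $D_\infty$ compact, one has $\operatorname{tr}(D_\infty - D_\beta)\to 0$ for $p=1$ directly, and for general $p$ one uses $\|D_\infty-D_\beta\|_{S_p}\le \|D_\infty-D_\beta\|^{1-1/p}\|D_\infty-D_\beta\|_{S_1}^{1/p}$ (interpolation), the first factor going to $0$ by Theorem \ref{UniConv} and the second staying bounded; alternatively one invokes the dominated convergence principle for Schatten ideals (e.g. \cite{brasche-demuth} or the framework of \cite{BBB}). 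A secondary point to check is that $(\tfrac1\beta + JJ^*)^{-1}JJ^*$ is indeed uniformly bounded in $\beta$ and converges strongly to the identity on $\overline{\operatorname{Ran} J} = L^2(\Gamma)$, which is immediate from the spectral theorem since $0$ is not an eigenvalue of $JJ^*$; this is what makes the $D_\beta$ estimates uniform and legitimizes comparing them to $D_\infty$.

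Putting this together, the proof would read: (i) by Lemma \ref{cor2.1}, $\cH^{3/2}JK_1$ is bounded; (ii) by the Weyl law for the Dirichlet-to-Neumann operator $\cH$ identified in Proposition \ref{description}, $JJ^* = \cH^{-1}\in S_q$ for all $q>d-1$; (iii) hence $\cH^{1/2}JK_1 = JJ^*\,(\cH^{3/2}JK_1)\in S_q$ for all $q>d-1$, and uniformly in $\beta$, $(\tfrac1\beta+JJ^*)^{-1/2}(JK_1)\in S_q$ with bounds independent of $\beta$; (iv) by (\ref{resolvent}), (\ref{Dinf}) and the ideal property, $D_\beta, D_\infty\in S_p$ for $p>(d-1)/2$, uniformly in $\beta$; (v) since $0\le D_\beta\uparrow D_\infty$ strongly with $D_\infty\in S_p$, dominated convergence for singular values (or interpolation with the uniform bound from Theorem \ref{UniConv}) yields $\|D_\beta - D_\infty\|_{S_p}\to 0$; (vi) for $d=2$ this includes $p=1$, i.e. trace class convergence.
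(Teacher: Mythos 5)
Your overall strategy coincides with the paper's: factor $D_\infty=(\cH^{1/2}JK_1)^*\cH^{1/2}JK_1$, write $\cH^{1/2}JK_1=\cH^{-1}\,(\cH^{3/2}JK_1)$ with $\cH^{3/2}JK_1$ bounded by Lemma \ref{cor2.1}, reduce to $\cH^{-1}\in S_q$ for $q>d-1$, and then upgrade strong convergence to $S_p$-convergence using that $D_\infty\in S_p$. The paper carries out the last step by simply invoking \cite[Corollary~2.20]{BBB}, which is exactly the "dominated convergence for singular values of $0\le D_\beta\uparrow D_\infty$" principle you sketch; your version of that argument is correct.

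Two points where you diverge. First, for the eigenvalue decay of $\cH^{-1}$ you invoke the Weyl law for the Dirichlet-to-Neumann operator as a first-order elliptic pseudodifferential operator on $\Gamma$, which gives the sharp two-sided asymptotic $\mu_n(\cH)\sim n^{1/(d-1)}$. The paper instead derives only the one-sided lower bound $\check\lambda_k\ge Ck^{1/(d-1)}$ directly from the trace Sobolev embedding~(\ref{trace-emb}) applied to $\ccalE_1$, which is all that is needed and stays within the elementary functional-analytic toolkit already set up. Your route is stronger but brings in heavier machinery; the paper's is cheaper and self-contained. Second, and this is a genuine flaw in one of your two proposed convergence arguments: the interpolation bound $\|D_\infty-D_\beta\|_{S_p}\le\|D_\infty-D_\beta\|^{1-1/p}\,\|D_\infty-D_\beta\|_{S_1}^{1/p}$ requires the $S_1$-norm on the right to be finite, but for $d\ge 3$ the threshold is $p>(d-1)/2\ge 1$, so $D_\infty$ is typically \emph{not} trace class and $\|D_\infty-D_\beta\|_{S_1}=\infty$. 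That alternative would only work for $d=2$. Your other alternative — the monotone-domination argument using $0\le D_\infty-D_\beta\le D_\infty$ and $\lambda_n(D_\infty-D_\beta)\le\lambda_n(D_\infty)$ together with termwise convergence — is the correct one and is what the cited corollary encapsulates, so the proof goes through if you drop the interpolation route. The separate verification that $D_\beta\in S_p$ uniformly in $\beta$ is then redundant, since it follows from $0\le D_\beta\le D_\infty\in S_p$.
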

\begin{proof} First we recall that owing to \cite[Corollary 2.20]{BBB} that $S_p$-convergence holds true whenever $D_\infty\in S_p$.\\
Having in mind that $D_\infty=(\check{H}^{1/2}JK_1)^*\check{H}^{1/2}JK_1$, we get that it  lies in $S_p$ if and only if $\check{H}^{1/2}JK_1$ lies in $S_{2p}$. On the other hand from the boundedness of $\check{H}^{3/2}JK_1$, we obtain
\begin{eqnarray}
\|\check{H}^{1/2}JK_1\|_{S_{2p}}\leq \|\check{H}^{-1}\|_{S_{2p}}\|\check{H}^{3/2}JK_1\|.
\end{eqnarray}
Thus we are led to prove that $\check{H}^{-1}\in S_{q}$ for $q>d-1$.\\
To that end we shall use the trace  inequality (\ref{trace-emb}) to obtain:\\
a) For $d\geq 3$: from the construction of $\ccalE$, the following Sobolev type  inequality holds true
\begin{eqnarray}
\big(\int_\Gamma |\psi|^{\frac{2(d-1)}{d-2}}\,d\sigma\big)^{\frac{d-2}{d-1}}\leq C\check{\calE_1}[\psi],\ \forall\,\psi\in H^{1/2}(\Gamma).
\label{Sob}
\end{eqnarray}
Now it is standard that Sobolev inequality leads to a lower bound for the eigenvalues $\check{\lambda}_k$ (repeated as many times as their multiplicity in an increasing way) of $\check H$:
\begin{eqnarray}
\check{\lambda}_k\geq Ck^{\frac{1}{d-1}}.
\label{Low-Eig1}
\end{eqnarray}
Thus $\check{H}^{-1}\in S_q$ for every $q>d-1$, which was to be proved for $d\geq 3$.\\
b) For $d=2$, the proof is similar so we omit it.

\end{proof}
By the end of this section we shall discuss the rate of convergence in $S_1$ in two dimensions. It was proved in \cite[Theorem 2.3]{bbb12} that the maximal rate of convergence in $S_1$ is proportional to $1/\beta$ and that trace-class convergence holds true if and only if the operator $\check{H}JK_1$ is a Hilbert--Schmidt operator. However, according to \cite[Proposition 2.4]{bbb12} if for some $r\in(0,1)$ the operator $\check{H}^{\frac{1+r}{2}}JK_1$ is a Hilbert--Schmidt operator then one has trace-class convergence with rate $O(1/{\beta^r})$.
\begin{prop}
In the case $d=2$ it holds
\begin{eqnarray}
\lim_{\beta\to\infty}\beta^r\|D_\beta-D_\infty\|_{S_1}<\infty,\ \forall\,r\in(0,1).
\end{eqnarray}
\label{2D}
\end{prop}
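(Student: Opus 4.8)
The plan is to invoke the criterion from \cite[Proposition 2.4]{bbb12}: it suffices to exhibit some $r\in(0,1)$ — in fact, all $r\in(0,1)$ — for which the operator $\check{H}^{\frac{1+r}{2}}JK_1$ is Hilbert--Schmidt, i.e.\ belongs to $S_2$. Since $d=2$, the relevant Schatten target is $S_2$ and the conclusion $\lim_{\beta\to\infty}\beta^r\|D_\beta-D_\infty\|_{S_1}<\infty$ then follows verbatim from that Proposition. So the whole task reduces to an $S_2$-membership estimate for $\check{H}^{\frac{1+r}{2}}JK_1$.

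First I would factor $\check{H}^{\frac{1+r}{2}}JK_1=\check{H}^{\frac{r-2}{2}}\cdot\check{H}^{3/2}JK_1$ and use the ideal property of $S_2$: since $\check{H}^{3/2}JK_1$ is bounded by Lemma \ref{cor2.1}, we get
\begin{eqnarray}
\|\check{H}^{\frac{1+r}{2}}JK_1\|_{S_2}\leq \|\check{H}^{\frac{r-2}{2}}\|_{S_2}\,\|\check{H}^{3/2}JK_1\|.
\end{eqnarray}
Thus it remains to show $\check{H}^{\frac{r-2}{2}}\in S_2$, equivalently $\check{H}^{-(2-r)}\in S_1$, i.e.\ $\sum_k \check{\lambda}_k^{-(2-r)}<\infty$. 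Here I would reuse the eigenvalue lower bound obtained in the previous theorem: in dimension $d=2$ the analogue of (\ref{Low-Eig1}) reads $\check{\lambda}_k\geq Ck$ (this is the $d=2$ case of the Sobolev-to-eigenvalue argument, alluded to in part (b) of that proof). Consequently $\sum_k \check{\lambda}_k^{-(2-r)}\leq C^{-(2-r)}\sum_k k^{-(2-r)}$, and since $2-r>1$ for every $r\in(0,1)$, this series converges. Hence $\check{H}^{\frac{1+r}{2}}JK_1\in S_2$ for all $r\in(0,1)$, and the claim follows.

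The only mildly delicate point is making the reduction to $S_2$ clean: one must be careful that the relevant Hilbert--Schmidt hypothesis in \cite[Proposition 2.4]{bbb12} is precisely about $\check{H}^{\frac{1+r}{2}}JK_1$ (and not, say, about a symmetrized version), and that the rate constant it produces is indeed finite rather than merely $O(\cdot)$; both are guaranteed by the cited statement. The estimate $\|\check{H}^{\frac{1+r}{2}}JK_1\|_{S_2}\leq\|\check{H}^{(r-2)/2}\|_{S_2}\|\check{H}^{3/2}JK_1\|$ uses only $\frac{1+r}{2}=\frac{r-2}{2}+\frac32$ together with boundedness of the second factor, so no further regularity input beyond Lemma \ref{cor2.1} and the $d=2$ eigenvalue asymptotics is needed. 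I do not foresee a genuine obstacle here; the proposition is essentially a corollary of Lemma \ref{cor2.1} and the spectral bound for $\check{H}$, packaged through the abstract rate result of \cite{bbb12}.
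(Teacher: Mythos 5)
Your reduction is exactly the paper's: invoke \cite[Proposition 2.4]{bbb12}, factor $\check{H}^{\frac{1+r}{2}}JK_1 = \check{H}^{\frac{r-2}{2}}\cdot\check{H}^{3/2}JK_1$, use Lemma \ref{cor2.1} for the second factor, and reduce to membership of $\check{H}^{\frac{r-2}{2}}$ in $S_2$. So far the two arguments coincide.

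The gap is in the eigenvalue lower bound. You assert $\check{\lambda}_k \geq Ck$ and attribute it to ``the $d=2$ case of the Sobolev-to-eigenvalue argument,'' but that is not what the Sobolev argument yields in two dimensions. The trace embedding (\ref{trace-emb}) for $d=2$ lands in $L^p(\Gamma)$ for every finite $p$ but not in $L^\infty(\Gamma)$; accordingly the resulting lower bound is the weaker, $p$-dependent $\check{\lambda}_k \geq C_p\, k^{(p-2)/p}$, with an exponent strictly smaller than $1$ (and a constant $C_p$ that may degrade as $p\to\infty$), not $\check{\lambda}_k\geq Ck$. The paper makes do with this weaker family: given $r\in(0,1)$ it chooses $p>2\tfrac{2-r}{1-r}$, which forces $(2-r)\tfrac{p-2}{p}>1$ and hence $\sum_k\check{\lambda}_k^{-(2-r)}<\infty$. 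Your linear bound $\check{\lambda}_k\geq Ck$ is in fact true for smooth planar domains (it follows from Weyl-type asymptotics for the Dirichlet--to--Neumann operator), but that is a separate and heavier input which the Sobolev route does not deliver, and which you neither cite nor need — fixing $p$ depending on $r$ as the paper does is both sufficient and lighter. A further sanity check: if one \emph{could} obtain $\check{\lambda}_k\geq Ck$ by this elementary route one would be on a slippery slope toward $r=1$, which the paper's Theorem \ref{negative} shows is excluded; the borderline divergence $\sum_k k^{-1}=\infty$ is precisely what saves the day, but you should be explicit that your stronger bound does not come for free from what is proved in the previous theorem.
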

\begin{proof}
For $d=2$ we have the lower bound
$$
\check{\lambda_k}\geq Ck^{\frac{p-2}{p}},\ \forall\,p>2.
$$
Thus if for a given  $r\in(0,1)$, we choose $p>2\frac{2-r}{1-r}>2$, we get $(2-r)\frac{p-2}{p}>2$. Thus $\check{H}^{\frac{r-2}{2}}$ is a Hilbert--Schmidt operator and so is $\check{H}^{\frac{1+r}{2}}JK_1$.
\end{proof}
\begin{rk}
{\rm
We shall show in the example below that the limit exponent $r=1$ is excluded!
}
\end{rk}

\section{Asymptotic expansions for the resolvents and the eigenprojections}
Asymptotic expansions are theoretically and numerically interesting in the sense that they offers  'good' approximations for the studied objects. How 'good' is the approximation depends on its order and on the computation of its coefficients. In operator theory there are two types of asymptotic: uniform, i.e. the rest is small with respect the operator norm and strong asymptotic, i.e., the rest is small for  every fixed element from the domain of the operator.\\
Although we shall give lower order asymptotic (of second order) of the spectral objects related to Robin Laplacian,  we shall write explicitly the  coefficients of the asymptotic and this is new to our best knowledge for such problems. In particular we shall show that the coefficients involved in the asymptotic depend only on the Neumann Laplacian and its trace, the Dirichlet Laplacian and the trace operator.\\ Especially, the coefficients of the expansion of the eigenvalues of the Robin Laplacian depend only on the eigenvalues of the Dirichlet Laplacian.
\\
We start by giving an asymptotic expansion for $(H_\beta + 1)^{-1}$.
\begin{theo} The following first order uniform expansion holds true:
\begin{eqnarray}\label{first asymp formmula}
(H_\beta+1)^{-1}=(-\Delta_D+1)^{-1}+\frac{1}{\beta}K+\frac{1}{\beta^2}K'.
\end{eqnarray}
where, $\ds
K=(\check{H}JK_1)^*\check{H}JK_1=\mathcal{R}\frac{\partial PK_1}{\partial\nu}$
and $\|K'\|\leq \|\check{H}^{3/2}JK_1\|^2$.
\end{theo}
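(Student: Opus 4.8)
The plan is to start from the exact resolvent formula $D_\beta = (JK_1)^*(\tfrac{1}{\beta}+JJ^*)^{-1}(JK_1)$ of \eqref{resolvent} and expand the bounded operator $(\tfrac{1}{\beta}+JJ^*)^{-1}$ in powers of $1/\beta$. Writing $JJ^* = \cH^{-1}$, one has formally $(\tfrac{1}{\beta}+\cH^{-1})^{-1} = \cH(1+\tfrac{1}{\beta}\cH)^{-1} = \cH - \tfrac{1}{\beta}\cH^2 + \tfrac{1}{\beta^2}\cH^2(1+\tfrac{1}{\beta}\cH)^{-1}\cH$, the last identity being the Neumann-series remainder after two terms. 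Sandwiching between $(JK_1)^*$ and $JK_1$ gives
\begin{eqnarray*}
D_\beta = (JK_1)^*\cH(JK_1) - \frac{1}{\beta}(JK_1)^*\cH^2(JK_1) + \frac{1}{\beta^2}(JK_1)^*\cH^2\Big(1+\frac{1}{\beta}\cH\Big)^{-1}\cH(JK_1).
\end{eqnarray*}
Since $K_1 - (H_\beta+1)^{-1} = D_\beta$ and $D_\infty = (\cH^{1/2}JK_1)^*\cH^{1/2}JK_1 = (JK_1)^*\cH(JK_1)$ by \eqref{Dinf}, while $K_1 - (-\Delta_D+1)^{-1} = D_\infty$, subtracting yields
\begin{eqnarray*}
(H_\beta+1)^{-1} = (-\Delta_D+1)^{-1} + \frac{1}{\beta}(JK_1)^*\cH^2(JK_1) - \frac{1}{\beta^2}(JK_1)^*\cH^2\Big(1+\frac{1}{\beta}\cH\Big)^{-1}\cH(JK_1),
\end{eqnarray*}
so that $K = (JK_1)^*\cH^2(JK_1) = (\cH JK_1)^*(\cH JK_1)$ and $K' = -(JK_1)^*\cH^2(1+\tfrac{1}{\beta}\cH)^{-1}\cH(JK_1)$.

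The next step is to justify each manipulation rigorously. The only delicate point is that $\cH$ is unbounded, so the bare identity $(\tfrac{1}{\beta}+\cH^{-1})^{-1}=\cH-\tfrac1\beta\cH^2+\cdots$ must be read as an identity between bounded operators only after composing with $JK_1$ on the right (and its adjoint on the left). Concretely: $JK_1$ maps $L^2(\Om)$ into $D(\cH^{3/2})$ by Lemma \ref{cor2.1}, hence also into $D(\cH)$ and $D(\cH^2)$ (the latter needs a small extra remark, or one simply factors through $\cH^{3/2}$ as done below), so $\cH JK_1$ and $\cH^2 JK_1 = \cH^{1/2}(\cH^{3/2}JK_1)$ make sense, the latter being $\cH^{1/2}$ composed with a bounded operator. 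The resolvent identity $(\tfrac1\beta + \cH^{-1})^{-1} = \cH - \tfrac1\beta\cH^2(\tfrac1\beta+\cH^{-1})^{-1}\cdot\cH^{-1}\cdots$ is verified by applying both sides to a vector of the form $\cH^{-1}\varphi$ with $\varphi\in D(\cH)$, which is a core, and then extending by continuity after the $JK_1$-sandwich.

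For the norm bound on $K'$, write $K' = (JK_1)^*\cH^{2}(1+\tfrac1\beta\cH)^{-1}\cH(JK_1)$ up to sign and split the middle factor symmetrically:
\begin{eqnarray*}
K' = -\big(\cH^{3/2}JK_1\big)^*\,\cH^{-1}\Big(1+\frac{1}{\beta}\cH\Big)^{-1}\cH^{-1}\,\big(\cH^{3/2}JK_1\big),
\end{eqnarray*}
wait—this is not quite right; rather one writes $\cH^2(1+\tfrac1\beta\cH)^{-1} = \cH^{1/2}\big[\cH(1+\tfrac1\beta\cH)^{-1}\big]\cH^{1/2}$ where the bracketed operator has norm $\le \beta$; that would give a bound growing in $\beta$, which is wrong. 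The correct bookkeeping keeps one factor of $1/\beta$ explicit in front: from $(1+\tfrac1\beta\cH)^{-1}\cH = \beta\big(1-(1+\tfrac1\beta\cH)^{-1}\big)$ one sees $\tfrac1{\beta^2}\cdot\cH^2(1+\tfrac1\beta\cH)^{-1} = \tfrac1\beta\cH(1-(1+\tfrac1\beta\cH)^{-1})$, and since $\|1-(1+\tfrac1\beta\cH)^{-1}\|\le 1$ (as $\cH\ge 0$) one arrives, after sandwiching with $\cH^{3/2}JK_1$ and using $\cH^{2}(\cdot) = \cH^{1/2}\cdot\cH^{3/2}(\cdot)$ paired with $\cH^{3/2}(\cdot)^*\cdot\cH^{1/2}$... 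The cleanest route, and the one I expect to use, is: $\tfrac1{\beta^2}K' = D_\infty - D_\beta - \tfrac1\beta K$, and from \eqref{resolvent} together with the operator monotonicity of $t\mapsto (\tfrac1\beta+t)^{-1}$ one checks $0 \le \tfrac1\beta K - (D_\infty - D_\beta) = \tfrac1{\beta^2}(\cH^{3/2}JK_1)^*(1+\tfrac1\beta\cH)^{-1}(\cH^{3/2}JK_1)$, whence $0\le \tfrac1{\beta^2}K' $ has operator norm $\le \tfrac1{\beta^2}\|(1+\tfrac1\beta\cH)^{-1}\|\,\|\cH^{3/2}JK_1\|^2 \le \tfrac1{\beta^2}\|\cH^{3/2}JK_1\|^2$, giving $\|K'\|\le\|\cH^{3/2}JK_1\|^2$. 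The main obstacle is precisely this bookkeeping with the unbounded $\cH$: getting a remainder that is genuinely $O(\beta^{-2})$ (not merely bounded) requires factoring the two surplus powers of $\cH$ onto the $JK_1$ factors via Lemma \ref{cor2.1}, rather than leaving them on the resolvent. Finally the alternative formula $K = \mathcal R\,\tfrac{\partial PK_1}{\partial\nu}$ follows from the identities $\cH JK_1 u = \tfrac{\partial PK_1 u}{\partial\nu}$ (proved in Theorem \ref{UniConv}) and $(JK_1)^*\cH = ((\cH JK_1)^* $ composed appropriately$)$, together with $\mathcal R = (J|_{H_0^1(\Om)^\perp})^{-1}$ and $J^* = \mathcal R$ on $H^{1/2}(\Gamma)$ modulo the identification of $L^2(\Gamma)$-adjoint with $\mathcal R$; spelling this out, $(JK_1)^*\cH^2 JK_1 = (JK_1)^*\cH\cdot\cH JK_1 = \mathcal R\,\cH JK_1 = \mathcal R\,\tfrac{\partial PK_1}{\partial\nu}$, since $(\cH JK_1)^* = (JK_1)^*\cH$ as $\cH$ is self-adjoint and $J^*=\mathcal R$.
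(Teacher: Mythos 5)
Your proposal matches the paper's proof step for step: both start from the resolvent formula \eqref{resolvent}, expand $(\tfrac1\beta+\cH^{-1})^{-1}=\cH(1+\tfrac1\beta\cH)^{-1}$ through second order, redistribute the surplus powers of $\cH$ onto $JK_1$ via Lemma \ref{cor2.1}, identify the leading term with $D_\infty$ through \eqref{Dinf}, and bound the remainder using $0\le(1+\tfrac1\beta\cH)^{-1}\le 1$. The only slips are cosmetic: the correct sign is $K'\le 0$ (since $\tfrac1{\beta^2}K'=D_\infty-D_\beta-\tfrac1\beta K$ and $\tfrac1\beta K-(D_\infty-D_\beta)=\tfrac1{\beta^2}(\cH^{3/2}JK_1)^*(1+\tfrac1\beta\cH)^{-1}\cH^{3/2}JK_1\ge 0$), and ``$J^*=\mathcal R$'' is loose---what actually holds is $(JK_1)^*\cH=\mathcal R$ on $D(\cH)$, as one sees from $\bigl((JK_1)^*\cH\psi,u\bigr)_{L^2(\Om)}=\ccalE_1(\psi,JK_1u)=\calE_1(\mathcal R\psi,PK_1u)=(\mathcal R\psi,u)_{L^2(\Om)}$---but neither affects the stated bound $\|K'\|\le\|\cH^{3/2}JK_1\|^2$ or the identification $K=\mathcal R\,\tfrac{\partial PK_1}{\partial\nu}$.
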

\begin{proof}
From the construction of $\ccalE_1$ we derive
\begin{eqnarray}
\check{\mathcal{E}}_1(JK_1u,Jv))=
\mathcal{E}_1(K_1u,Pv)=(u,
Pv)_{L^2(\Omega)},\forall u\in L^2(\Omega), \forall v\in
H^1(\Omega).
\end{eqnarray}
It follows that
\begin{eqnarray}
(\check{H}JK_1u,Jv)_{L^2(\Gamma)}=(u,
Pv)_{L^2(\Omega)}
\ and\ (\check{H}JK_1)^*Jv=Pv.
\end{eqnarray}
Then,
\begin{eqnarray}
(\check{H}JK_1)^*\check{H}JK_1u=P\mathcal{R}\frac{\partial PK_1u}{\partial\nu}=
\mathcal{R}\frac{\partial PK_1u}{\partial\nu}.
\end{eqnarray}
On the other hand relying on the resolvent formula (\ref{resolvent})we obtain
\begin{eqnarray}
%
\ds D_\infty-D_\beta
  &=&    (\check{H}^{1/2}JK_1)^*\check{H}^{1/2}JK_1-(JK_1)^*(\frac{1}{\beta}+\check{H}^{-1})^{-1}JK_1\\
  &=& \ds(\check{H}^{1/2}JK_1)^*\check{H}^{1/2}JK_1-
  (\cH^{1/2}JK_1)^*(1+\frac{1}{\beta}\check{H})^{-1}\check{H}^{1/2}JK_1\\
  &=& \frac{1}{\be}(\check{H}JK_1)^*(1+\frac{1}{\beta}\check{H})^{-1}\check{H}JK_1\\
  &=& \frac{1}{\beta}(\check{H}JK_1)^*\check{H}JK_1-\frac{1}{\beta^2}(\check{H}^{3/2}JK_1)^*(1+\frac{1}{\beta}\check{H})^{-1}\check{H}^{3/2}JK_1\\
%
\end{eqnarray}
To conclude, it suffices to note that, $\ds 0\leq(1+\frac{1}{\beta}\check{H})^{-1}\leq 1$, and the proof is finished.
\end{proof}
Henceforth, $ o_s(\frac{1}{\be^2})$ (resp. $o_u(\frac{1}{\be^2})$ ) denotes an operator-valued function such
that $\be^2 o_s(\frac{1}{\be^2})f\to 0,\ \forall f$ (resp. $\be^2\|o_u(\frac{1}{\be^2})\|\to 0$) as $\be\to\infty$.\\
The latter theorem yields automatically  the second order strong asymptotic expansion for large $\beta$.
\begin{coro} For large $\beta$ the following strong asymptotic formula holds true:
\begin{eqnarray}
(H_\beta+1)^{-1}&=&(-\Delta_D+1)^{-1}+\frac{1}{\beta}(\check{H}JK_1)^*\check{H}JK_1-\frac{1}{\beta^2}
(\check{H}^{3/2}JK_1)^*\check{H}^{3/2}JK_1\nonumber\\
&+& o_s(\frac{1}{\beta^2}).
\label{order2}
\end{eqnarray}
\end{coro}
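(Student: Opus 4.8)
The plan is to derive the Corollary directly from the first-order uniform expansion of the preceding Theorem by splitting the remainder term $\frac{1}{\beta^2}K'$ into an explicit second-order coefficient plus a strong-little-$o$ remainder. Recall from the proof of that theorem that one has the exact identity
\begin{eqnarray}
D_\infty-D_\beta=\frac{1}{\beta}(\check H JK_1)^*\check H JK_1-\frac{1}{\beta^2}(\check H^{3/2}JK_1)^*\Bigl(1+\frac{1}{\beta}\check H\Bigr)^{-1}\check H^{3/2}JK_1,
\end{eqnarray}
so that the operator $K'$ in the Theorem is precisely $K'=-(\check H^{3/2}JK_1)^*(1+\frac{1}{\beta}\check H)^{-1}\check H^{3/2}JK_1$ (which is why the stated bound $\|K'\|\le\|\check H^{3/2}JK_1\|^2$ holds, via $0\le(1+\frac1\beta\check H)^{-1}\le1$). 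The whole point of the Corollary is to write $(1+\frac1\beta\check H)^{-1}=1+\bigl[(1+\frac1\beta\check H)^{-1}-1\bigr]$ and absorb the bracketed part into the remainder.

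First I would set $A:=\check H^{3/2}JK_1$, which is bounded by Lemma~\ref{cor2.1}. Then
\begin{eqnarray}
D_\infty-D_\beta-\frac{1}{\beta}A_0^*A_0+\frac{1}{\beta^2}A^*A=\frac{1}{\beta^2}A^*\Bigl[1-\Bigl(1+\frac{1}{\beta}\check H\Bigr)^{-1}\Bigr]A,
\end{eqnarray}
where $A_0:=\check H JK_1$. So it remains to check that $\frac{1}{\beta^2}A^*[1-(1+\frac1\beta\check H)^{-1}]A=o_s(\frac{1}{\beta^2})$, i.e. that $[1-(1+\frac1\beta\check H)^{-1}]A f\to 0$ in $L^2(\Gamma)$ for every fixed $f\in L^2(\Omega)$. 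Since $0\le 1-(1+\frac1\beta\check H)^{-1}\le 1$ uniformly in $\beta$, these operators are uniformly bounded, so by a standard density/equicontinuity argument it suffices to verify strong convergence to $0$ on a dense set, or directly: for fixed $g=Af\in L^2(\Gamma)$, write $g=\sum_k (g,\check e_k)\check e_k$ in the eigenbasis of $\check H$ (which has compact resolvent, hence discrete spectrum $\check\lambda_k$), and note that $1-(1+\frac1\beta\check H)^{-1}$ acts on $\check e_k$ by the multiplier $\frac{\check\lambda_k/\beta}{1+\check\lambda_k/\beta}$, which tends to $0$ as $\beta\to\infty$ for each fixed $k$ and is bounded by $1$; dominated convergence in $\ell^2$ then gives $\|[1-(1+\frac1\beta\check H)^{-1}]g\|^2=\sum_k\bigl(\frac{\check\lambda_k/\beta}{1+\check\lambda_k/\beta}\bigr)^2|(g,\check e_k)|^2\to 0$. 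Multiplying by the bounded operator $\frac{1}{\beta^2}A^*$ only improves the decay, which is exactly the assertion $o_s(\frac{1}{\beta^2})$ after dividing out $\beta^{-2}$.

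Finally I would rename $A_0^*A_0=(\check H JK_1)^*\check H JK_1$ and $A^*A=(\check H^{3/2}JK_1)^*\check H^{3/2}JK_1$ to match the notation of formula (\ref{order2}), and observe that the sign is $-\frac{1}{\beta^2}A^*A$, consistent with the minus sign appearing in the exact identity above. There is essentially no genuine obstacle here; the only mildly delicate point is justifying that the remainder is $o_s$ rather than merely $O_u$ — that is, one must use the spectral (dominated-convergence) argument rather than the crude norm bound $0\le 1-(1+\frac1\beta\check H)^{-1}\le 1$, since the latter alone would only yield an $O(\beta^{-2})$ statement with no gain. Everything else is bookkeeping on top of the already-established exact resolvent identity and the boundedness of $\check H^{3/2}JK_1$.
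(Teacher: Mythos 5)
Your proposal is correct and is exactly the argument the paper leaves implicit when it says the Corollary follows ``automatically'' from the preceding Theorem: you extract the exact identity $D_\infty-D_\beta=\tfrac{1}{\beta}(\check H JK_1)^*\check H JK_1-\tfrac{1}{\beta^2}A^*(1+\tfrac1\beta\check H)^{-1}A$ with $A=\check H^{3/2}JK_1$, split off $A^*A$, and justify that $\tfrac{1}{\beta^2}A^*[1-(1+\tfrac1\beta\check H)^{-1}]A=o_s(\beta^{-2})$ by the spectral/dominated-convergence argument for the strong limit $(1+\tfrac1\beta\check H)^{-1}\to 1$. This is the standard and essentially unique way to promote the uniform $O(\beta^{-2})$ bound of the Theorem to the strong $o_s(\beta^{-2})$ statement, and your bookkeeping of the signs and the identification of $K'$ are both accurate.
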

%
%
%
We turn our attention now to give the expansions of the eigenprojections. To that end we need an expansion for $(H_\beta -z)$ for $z$ in the resolvent set $\rho(H_\beta)$.\\
Since $\{(H_\beta+1)^{-1}\}$ converges in norm to
$(-\Delta_D+1)^{-1}$ when $\beta\to\infty$, it follows that if
$z\in\rho(-\Delta_D)$, then $z\in\rho(H_\beta)$ for $\beta$
sufficiently large and $\{(H_\beta-z)^{-1}\}$ converge in
norm to $(-\Delta_D-z)^{-1}$ uniformly in any compact subset of
$\rho(-\Delta_D)$ as $\beta$ goes to infinity. In particular the
family of the resolvents $\{(H_\beta-z)^{-1}\}$ is bounded
uniformly in $\be$ and $z$ in any compact subset of
$\rho(-\Delta_D)$ (for large $\beta$). Moreover, one has :
\begin{prop}
For large $\beta$, the resolvent $(-\Delta_\beta-z)^{-1}$ admits the second order strong asymptotic expansion uniformly in any compact subset of $\rho(-\Delta_D)$:
\begin{eqnarray}\label{second asymp formula}
(H_\beta-z)^{-1}&=&(-\Delta_D-z)^{-1}+\frac{1}{\beta}LKL-\frac{1}{\be^2}(LRL-(1+z)LKLKL)\nonumber\\
&&+o_s(\frac{1}{\beta^2}),
\end{eqnarray}
where $K$ is the operator given by Theorem \ref{first asymp formmula} and
\begin{eqnarray}
L=L(z):=\left(1+(1+z)(-\Delta_D-z)^{-1}\right),\ R:=(\check{H}^{3/2}JK_1)^*\check{H}^{3/2}JK_1.
\end{eqnarray}
\end{prop}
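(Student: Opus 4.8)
The plan is to derive the expansion for $(H_\beta-z)^{-1}$ from the already-established expansion \eqref{first asymp formmula} for $(H_\beta+1)^{-1}$ via the second resolvent identity. First I would write, for $z\in\rho(-\Delta_D)$ and $\beta$ large,
\begin{eqnarray*}
(H_\beta-z)^{-1}=(H_\beta+1)^{-1}\left(1-(1+z)(H_\beta+1)^{-1}\right)^{-1},
\end{eqnarray*}
which is valid because $(1+z)(H_\beta+1)^{-1}$ has spectral radius bounded away from $1$ uniformly for $z$ in a fixed compact $\mathcal{K}\subset\rho(-\Delta_D)$ and $\beta$ large (this uses the uniform-in-$\beta$ norm convergence noted before the statement). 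Equivalently I would use the geometric-type identity
\begin{eqnarray*}
(H_\beta-z)^{-1}=\sum_{n\geq 0}(1+z)^n\big((H_\beta+1)^{-1}\big)^{n+1},
\end{eqnarray*}
but in practice only finitely many terms matter up to order $\beta^{-2}$.

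Next I would substitute \eqref{first asymp formmula}, i.e. $(H_\beta+1)^{-1}=(-\Delta_D+1)^{-1}+\frac{1}{\beta}K+\frac{1}{\beta^2}K'$ with $\|K'\|\leq\|\check H^{3/2}JK_1\|^2$, and collect terms by powers of $1/\beta$. The zeroth-order term reassembles into $(-\Delta_D-z)^{-1}$ after resumming the geometric series in $(1+z)(-\Delta_D+1)^{-1}$; this is exactly where the operator $L=L(z)=1+(1+z)(-\Delta_D-z)^{-1}$ appears, since $L$ is the operator identity expressing $\sum_{n\geq0}(1+z)^n\big((-\Delta_D+1)^{-1}\big)^{n+1}$ in closed form, or more precisely $L=(1-(1+z)(-\Delta_D+1)^{-1})^{-1}$ sandwiched appropriately. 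The first-order term then collects all the ways one factor of $\frac{1}{\beta}K$ can be inserted among the $(-\Delta_D+1)^{-1}$ factors, each insertion contributing a left and a right $(-\Delta_D+1)^{-1}$-resummation, producing precisely $\frac{1}{\beta}LKL$. At second order there are two contributions: one from a single insertion of $\frac{1}{\beta^2}K'$ (giving $\frac{1}{\beta^2}LK'L$, which I would then replace using $K'=-R+o_u(1)$-type control — more carefully, from the theorem $\beta^2\big((H_\beta+1)^{-1}-(-\Delta_D+1)^{-1}-\frac1\beta K\big)\to -R$ strongly by the Corollary, so $K'$ behaves like $-R$ in the strong sense), and one from two insertions of $\frac1\beta K$ with an intermediate $(-\Delta_D+1)^{-1}$-resummation, giving $+\frac{1}{\beta^2}LKLKL$ times a factor $(1+z)$ coming from the extra resolvent power. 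Matching signs and the factor $(1+z)$ yields the stated $-\frac{1}{\beta^2}\big(LRL-(1+z)LKLKL\big)$.

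The main obstacle is bookkeeping the error terms so that the remainder is genuinely $o_s(1/\beta^2)$ and uniform in $z\in\mathcal{K}$: one must show that the tail of the geometric series beyond the terms written contributes $O(1/\beta^3)$ in norm, that the substitution of $K'$ by $-R$ introduces only an $o_s(1/\beta^2)$ error (this is the strong, not uniform, statement — which is why the conclusion is a strong expansion), and that all the $L$, $K$, $R$ factors are bounded uniformly for $z\in\mathcal{K}$ (clear since $(-\Delta_D-z)^{-1}$ is norm-bounded on $\mathcal{K}$ and $K$, $R$ are fixed bounded operators by the earlier results). I would organize this by writing $(H_\beta+1)^{-1}=A+\frac1\beta K+r_\beta$ with $A=(-\Delta_D+1)^{-1}$, $\|r_\beta\|\le C/\beta^2$, expanding $\big(1-(1+z)(H_\beta+1)^{-1}\big)^{-1}$ around $\big(1-(1+z)A\big)^{-1}$ by a finite Neumann series, and discarding everything of order $\beta^{-3}$ in norm and the $r_\beta$-versus-$(-R/\beta^2)$ discrepancy into $o_s(1/\beta^2)$. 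A clean alternative, avoiding the explicit series, is to verify the identity directly: set $S_\beta$ equal to the proposed right-hand side minus $o_s(1/\beta^2)$, compute $(H_\beta-z)S_\beta-1$ using $(H_\beta-z)=(H_\beta+1)-(1+z)$ and the known action of $(H_\beta+1)$ on the range of the resolvent expansion, and check it is $o_s(1/\beta^2)$ after multiplying by $(H_\beta-z)^{-1}$, which is uniformly bounded. Either route reduces the problem to the already-proven Theorem \ref{first asymp formmula} and its Corollary plus elementary Neumann-series estimates.
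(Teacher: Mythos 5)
The central step of your main line of argument does not survive scrutiny. You expand $\bigl(1-(1+z)(H_\beta+1)^{-1}\bigr)^{-1}$ as a geometric (Neumann) series, asserting that $(1+z)(H_\beta+1)^{-1}$ has spectral radius bounded away from $1$ for $z$ in a compact $\mathcal K\subset\rho(-\Delta_D)$ and $\beta$ large. That is not the case: $z\in\rho(H_\beta)$ only guarantees $1\notin\sigma\bigl((1+z)(H_\beta+1)^{-1}\bigr)$, so that the inverse exists, but the spectral radius of $(1+z)(H_\beta+1)^{-1}$ equals $|1+z|/(\lambda_1(H_\beta)+1)$, which for large $\beta$ is approximately $|1+z|/(\lambda_1(-\Delta_D)+1)$. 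This exceeds $1$ on essentially every circle $C(\lambda_\infty,\epsilon)$ around a Dirichlet eigenvalue $\lambda_\infty>\lambda_1(-\Delta_D)$ — precisely the contours over which the Proposition is integrated in the sequel. There the geometric series diverges, so the resummation into $L(z)$-factors and the claim that "only finitely many terms matter up to order $\beta^{-2}$" have no justification; the tail is not small, it is not even summable.

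The paper avoids this entirely by starting from the closed-form algebraic identity (valid for any $z\in\rho(H_\beta)\cap\rho(-\Delta_D)$, with no restriction on a spectral radius):
\begin{eqnarray*}
(H_\beta-z)^{-1}-(-\Delta_D-z)^{-1}&=&\bigl(1+(1+z)(H_\beta-z)^{-1}\bigr)\bigl((H_\beta+1)^{-1}-(-\Delta_D+1)^{-1}\bigr)\\
&&\cdot\,\bigl(1+(1+z)(-\Delta_D-z)^{-1}\bigr),
\end{eqnarray*}
then substitutes the already-established expansion of the middle factor, first to get a first-order uniform expansion of $(H_\beta-z)^{-1}$, and then bootstraps that back into the left factor to reach second order. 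This is structurally what you were trying to reproduce by resummation, but without needing any convergent series. Your concluding suggestion — apply $(H_\beta-z)=(H_\beta+1)-(1+z)$ to the proposed $S_\beta$ and check the residual is $o_s(1/\beta^2)$ — would also work and avoids the issue, but you leave it as a sketch and your own closing sentence still invokes "elementary Neumann-series estimates," which is exactly the part that fails. As written, the proposal rests on an incorrect convergence claim and so does not constitute a proof.
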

\begin{proof}

Let $z\in\rho(-\Delta_D)$, then for large $\beta$ one has
\begin{eqnarray}\label{res. diff}
D_\infty(z)-D_\beta(z)&=&(1+(1+z)(H_\beta-z)^{-1})(D_\infty-D_\beta)\nonumber\\
&&\cdot(1+(1+z)(-\Delta_D-z)^{-1})
\end{eqnarray}
By formula (\ref{order2}), it follows that:
\begin{eqnarray}
u-\lim_{\be\to\infty}\beta\,(D_\infty(z)-D_\beta(z))=LKL,
\end{eqnarray}
uniformly in any compact subset of $\rho(-\Delta_D)$.\\
Thus, one writes,
\begin{eqnarray}\label{order 1}
(H_\beta-z)^{-1}=(-\Delta_D-z)^{-1}+\frac{1}{\beta}LKL+o_u(\frac{1}{\beta})
\end{eqnarray}
Then, if we substitute $(D_\infty-D_\beta)$  and
$(-\Delta_\beta-z)^{-1}$ by the corresponding terms given by the
formulae (\ref{order2}) and (\ref{order 1}) respectively, in
the equation (\ref{res. diff}) we obtain the desired result.
\end{proof}
The operators $H_\beta$ converge to $-\Delta_D$ in the
norm resolvent sense, furthermore these operators are selfadjoint,
nonnegative with compact resolvents, then the eigenvalues of
$(H_\beta)$ converge to ones of
$-\Delta_D$.\\
Let $\lambda_\infty$ be an eigenvalue of $-\Delta_D$, since the
operator $-\Delta_D$ has compact resolvents, then there exists
$\epsilon>0$ such that: $\mbox{spec}(-\Delta_D)\cap
B(\lambda_\infty,\epsilon)=\{\lambda_\infty\}$, where $B(\lambda_\infty,\epsilon)=\{z\in\mathbb{C}, |z-\lambda_\infty|\leq\epsilon\}$ .\\
In the following we denote,
\begin{itemize}
\item $E_\infty=\ker(-\Delta_D-\lambda_\infty)$, the eigenspace of $\lambda_\infty$, and $P_\infty$ the
spectral projection onto $E_\infty$. It is known that
\begin{eqnarray}
P_\infty=-\frac{1}{2i\pi}\int_{C(\lambda_\infty,\epsilon)}(-\Delta_D-z)^{-1}\,
\end{eqnarray}

where, $C(\lambda_\infty,\epsilon)$ is the circle of center
$\lambda_\infty$ and of radius $\epsilon$.
\item $E_\beta$ is the direct sum of the eigenspaces associated to the
eigenvalues of $(H_\beta)$ contained in
$B(\lambda_\infty,\epsilon)$, and $P_\beta$ is the spectral
projection onto $E_\beta$ given by:
$$P_\beta=-\frac{1}{2i\pi}\int_{C(\lambda_\infty,\epsilon)}(H_\beta-z)^{-1}\,dz$$
\end{itemize}
\begin{prop}\label{proj asymp formula}
The spectral projection $P_\beta$ admits a uniform asymptotic expansion
of the form,
\begin{eqnarray}
P_\beta=P_\infty+\frac{1}{\beta}\,Q-\frac{1}{\be^2}Q_1+o_s(\frac{1}{\beta^2})
\end{eqnarray}
Moreover, $P_\infty Q P_\infty=0$.
\end{prop}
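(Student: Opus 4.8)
The plan is to obtain the expansion of $P_\beta$ by inserting the known second‑order strong expansion of $(H_\beta-z)^{-1}$ from Proposition~\ref{second asymp formula} into the Riesz projection formula $P_\beta=-\frac{1}{2i\pi}\int_{C(\lambda_\infty,\epsilon)}(H_\beta-z)^{-1}\,dz$. Since the expansion of $(H_\beta-z)^{-1}$ is uniform in $z$ on the compact circle $C(\lambda_\infty,\epsilon)\subset\rho(-\Delta_D)$, integrating term by term is legitimate and the remainder stays $o_s(\beta^{-2})$ after integration. Concretely I would set
\begin{eqnarray*}
P_\infty&=&-\frac{1}{2i\pi}\int_{C(\lambda_\infty,\epsilon)}(-\Delta_D-z)^{-1}\,dz,\\
Q&=&-\frac{1}{2i\pi}\int_{C(\lambda_\infty,\epsilon)}L(z)KL(z)\,dz,\\
Q_1&=&-\frac{1}{2i\pi}\int_{C(\lambda_\infty,\epsilon)}\big(L(z)RL(z)-(1+z)L(z)KL(z)KL(z)\big)\,dz,
\end{eqnarray*}
and the first displayed formula follows immediately; the fact that each integrand is a norm‑continuous (indeed holomorphic away from $\lambda_\infty$) operator‑valued function on the circle makes all of this routine.

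The substantive part is the identity $P_\infty QP_\infty=0$. Here I would exploit the structure of $L(z)=1+(1+z)(-\Delta_D-z)^{-1}$ together with the fact that on the range of $P_\infty$ the Dirichlet resolvent acts as the scalar $(\lambda_\infty-z)^{-1}$. Thus $L(z)P_\infty=\big(1+\frac{1+z}{\lambda_\infty-z}\big)P_\infty=\frac{1+\lambda_\infty}{\lambda_\infty-z}P_\infty$, and likewise $P_\infty L(z)=\frac{1+\lambda_\infty}{\lambda_\infty-z}P_\infty$ by selfadjointness. Therefore
\begin{eqnarray*}
P_\infty QP_\infty=-\frac{1}{2i\pi}\int_{C(\lambda_\infty,\epsilon)}\frac{(1+\lambda_\infty)^2}{(\lambda_\infty-z)^2}\,P_\infty KP_\infty\,dz
=-(1+\lambda_\infty)^2\Big(\frac{1}{2i\pi}\int_{C(\lambda_\infty,\epsilon)}\frac{dz}{(\lambda_\infty-z)^2}\Big)P_\infty KP_\infty,
\end{eqnarray*}
and the scalar contour integral $\frac{1}{2i\pi}\int_{C(\lambda_\infty,\epsilon)}(\lambda_\infty-z)^{-2}\,dz$ vanishes because $(\lambda_\infty-z)^{-2}$ has zero residue at $z=\lambda_\infty$ (it is the derivative of $(\lambda_\infty-z)^{-1}$). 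Hence $P_\infty QP_\infty=0$.

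I expect the main obstacle to be purely bookkeeping: making sure that substituting the strong expansion under the integral sign preserves the $o_s(\beta^{-2})$ control. The point is that for a fixed $f$, $\beta^2\,o_s(\beta^{-2})(z)f\to 0$ uniformly in $z$ on the compact circle (this uniformity is exactly what Proposition~\ref{second asymp formula} provides), so $-\frac{1}{2i\pi}\int_{C(\lambda_\infty,\epsilon)}o_s(\beta^{-2})(z)\,dz$ applied to $f$ is again $o_s(\beta^{-2})$, the length of the contour being finite. Everything else — identifying the $\beta^{-1}$ and $\beta^{-2}$ coefficients as $Q$ and $Q_1$ above — is immediate from linearity of the integral. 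No new analytic input beyond Proposition~\ref{second asymp formula} and the elementary residue computation is needed.
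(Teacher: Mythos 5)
Your derivation of the expansion itself is exactly the paper's: integrate the uniform-in-$z$ second-order expansion of $(H_\beta-z)^{-1}$ over the Riesz contour and read off $Q$ and $Q_1$. Where you genuinely diverge is in the proof of $P_\infty QP_\infty=0$. The paper bypasses any residue computation: it notes that $P_\infty(P_\beta-P_\infty)P_\beta = P_\infty P_\beta^2 - P_\infty^2 P_\beta = 0$ identically for every $\beta$ (just from $P_\beta$ and $P_\infty$ being projections), then writes $P_\infty QP_\infty=\lim_{\beta\to\infty}\beta\,P_\infty(P_\beta-P_\infty)P_\beta=0$, using norm convergence $P_\beta\to P_\infty$ together with the expansion $\beta(P_\beta-P_\infty)\to Q$. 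Your route instead computes $Q$ explicitly on $\mathrm{Ran}\,P_\infty$: since $P_\infty$ commutes with $(-\Delta_D-z)^{-1}$, one gets $P_\infty L(z)=L(z)P_\infty=\tfrac{1+\lambda_\infty}{\lambda_\infty-z}P_\infty$, and the contour integral of $(\lambda_\infty-z)^{-2}$ vanishes because the residue at the second-order pole is zero. Both are correct. The paper's argument is the slicker one — it is purely algebraic and requires no knowledge of the structure of $L(z)$ or any residue calculus — whereas yours has the advantage of being self-contained at the level of formulas and of making visible \emph{why} the first-order coefficient vanishes on $E_\infty$ (the integrand has a double pole with no $(z-\lambda_\infty)^{-1}$ term). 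One small remark: you justify $P_\infty L(z)=\tfrac{1+\lambda_\infty}{\lambda_\infty-z}P_\infty$ ``by selfadjointness,'' but for $z$ off the real axis $L(z)$ is not selfadjoint; the clean justification is that $P_\infty$ commutes with $(-\Delta_D-z)^{-1}$, hence with $L(z)$.
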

\begin{proof}
Setting
\begin{eqnarray}
Q=-\frac{1}{2i\pi}\int_{C(\lambda_\infty,\epsilon)}LKL\,dz,\
Q_1=-\frac{1}{2i\pi}\int_{C(\lambda_\infty,\epsilon)}(LRL-(1+z)LKLKL)\,dz,
\end{eqnarray}
then the first identity is immediate by integrating the formula
(\ref{second asymp formula}) along the circle $C$.\\
For the second identity, since $\lim_{\beta\to\infty}\|P_\beta -P_\infty\|=0$, we obtain
\begin{eqnarray}
P_\infty QP_\infty=\lim_{\beta\to\infty}\beta\,P_\infty(P_\beta-P_\infty)P_\beta
=0.
\end{eqnarray}
\end{proof}
\section{Asymptotic expansion for the eigenvalues}

Next we shall improve the asymptotic expansion of eigenvalues developed in \cite[Theorem 1.2]{carbou} and extend it  to our context which deals with singular perturbations. The novelty at this stage is that we give a second order asymptotic expansion which coefficients are given by the eigenvalues of a matrix depending only on the Dirichlet Laplacian.\\
To that end we need some intermediate results.
\begin{prop} The following formulae hold true:
\begin{enumerate}
\item
\begin{eqnarray}\label{Dynkin's formula}
\ds PK_1=(-\Delta_N+1)^{-1}-(-\Delta_D+1)^{-1}=D_\infty.
\end{eqnarray}
In particular,
\begin{eqnarray}
\ds\frac{\partial PK_1}{\partial\nu}=-\frac{\partial
(-\Delta_D+1)^{-1}}{\partial\nu}.
\end{eqnarray}
\item $\ds P_\infty
LKLP_\infty=\frac{1}{(\lambda_\infty-z)^2}MP_\infty$, where $M$ is the matrix with entries
\begin{eqnarray}
\left(\ds\int_{\Gamma}\frac{\partial
f_i}{\partial\nu}{\frac{\partial
f_j}{\partial\nu}\,d\sigma}\right)_{1\leq i,j\leq m}
\end{eqnarray}
in an orthonormal basis $(f_1,\cdots,f_m)$ of $E_\infty$.
\end{enumerate}
\label{dynkin}
\end{prop}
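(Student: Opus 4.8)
The plan is to treat the two formulae separately, since the first is essentially a clean restatement of earlier identities while the second requires unwinding the definition of the operators $L$, $K$, and the spectral projection $P_\infty$.

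For assertion 1, I would start from the operator $D_\infty = K_1 - (-\Delta_D+1)^{-1} = (-\Delta_N+1)^{-1} - (-\Delta_D+1)^{-1}$, which is literally the definition given in Section 2. So the only thing to prove is $PK_1 = D_\infty$. For this I recall that by \eqref{Dinf} we have $D_\infty = (\check H^{1/2}JK_1)^*\check H^{1/2}JK_1$, and in the proof of Theorem \ref{first asymp formmula} it was shown that $(\check H JK_1)^*Jv = Pv$ and hence $(\check H^{1/2}JK_1)^*\check H^{1/2}JK_1 u = (\check H JK_1)^* \check H J K_1 u = \mathcal R\frac{\partial PK_1 u}{\partial\nu} = P\mathcal R\frac{\partial PK_1 u}{\partial\nu}$. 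The cleanest route, though, is probably to observe directly that for $u\in L^2(\Omega)$, $K_1 u - PK_1 u \in H_0^1(\Omega)$ (since $P$ is the $\mathcal E_1$-projection onto $H_0^1(\Omega)^\perp$, the complementary part lies in $H_0^1(\Omega)$), and to identify this complementary part: $K_1 u - PK_1 u$ solves $-\Delta w + w = u$ in $\Omega$ with $w = 0$ on $\Gamma$, i.e.\ $w = (-\Delta_D+1)^{-1}u$. Hence $PK_1 u = K_1 u - (-\Delta_D+1)^{-1}u = D_\infty u$. The displayed consequence about normal derivatives then follows by applying $\frac{\partial}{\partial\nu}$ to $PK_1 = K_1 - (-\Delta_D+1)^{-1}$ and noting $\frac{\partial K_1 u}{\partial\nu} = \frac{\partial (-\Delta_N+1)^{-1}u}{\partial\nu} = 0$ by the Neumann boundary condition.

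For assertion 2, I would first simplify $L(z)P_\infty$. Since $P_\infty$ projects onto $\ker(-\Delta_D - \lambda_\infty)$, we have $(-\Delta_D - z)^{-1}P_\infty = \frac{1}{\lambda_\infty - z}P_\infty$, so $L(z)P_\infty = \bigl(1 + \frac{1+z}{\lambda_\infty - z}\bigr)P_\infty = \frac{1+\lambda_\infty}{\lambda_\infty - z}P_\infty$, and by selfadjointness $P_\infty L(z) = \frac{1+\lambda_\infty}{\lambda_\infty - z}P_\infty$ as well. Therefore $P_\infty LKL P_\infty = \frac{(1+\lambda_\infty)^2}{(\lambda_\infty - z)^2}\,P_\infty K P_\infty$, and it remains to compute $P_\infty K P_\infty$. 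Using $K = \mathcal R\frac{\partial PK_1}{\partial\nu}$ together with the adjoint identity $(\check H JK_1)^*\check H JK_1 = \mathcal R \frac{\partial PK_1}{\partial\nu}$ and the relation $\check H^{1/2}JK_1 u$ paired against $\check H^{1/2}JK_1 v$, I get, for $u,v\in E_\infty$,
\[
(Ku, v)_{L^2(\Omega)} = (\check H JK_1 u,\ \check H JK_1 v)_{L^2(\Gamma)} = \Bigl(\tfrac{\partial PK_1 u}{\partial\nu},\ \tfrac{\partial PK_1 v}{\partial\nu}\Bigr)_{L^2(\Gamma)}.
\]
By assertion 1, $\frac{\partial PK_1 u}{\partial\nu} = -\frac{\partial (-\Delta_D+1)^{-1}u}{\partial\nu}$; and for $u = f_i \in E_\infty$ one has $(-\Delta_D+1)^{-1}f_i = \frac{1}{1+\lambda_\infty}f_i$, so $\frac{\partial PK_1 f_i}{\partial\nu} = -\frac{1}{1+\lambda_\infty}\frac{\partial f_i}{\partial\nu}$. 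Hence $(Kf_i, f_j) = \frac{1}{(1+\lambda_\infty)^2}\int_\Gamma \frac{\partial f_i}{\partial\nu}\frac{\partial f_j}{\partial\nu}\,d\sigma$, i.e.\ $P_\infty K P_\infty = \frac{1}{(1+\lambda_\infty)^2}MP_\infty$ in the basis $(f_1,\dots,f_m)$. Multiplying by the scalar factor computed above, the $(1+\lambda_\infty)^2$ cancels and we obtain $P_\infty LKLP_\infty = \frac{1}{(\lambda_\infty - z)^2}MP_\infty$, as claimed.

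The main obstacle I anticipate is bookkeeping rather than anything deep: one must be careful that $PK_1 u$ (not $K_1 u$) is what carries the relevant normal trace, that the factor $1+\lambda_\infty$ entering through $(-\Delta_D+1)^{-1}$ on $E_\infty$ is tracked consistently, and that it indeed cancels against the factor coming from $L(z)$ restricted to $E_\infty$ — if either factor is mishandled, a spurious $(1+\lambda_\infty)$-dependence survives and the clean formula $\frac{1}{(\lambda_\infty-z)^2}MP_\infty$ fails. A secondary point to check is that $\frac{\partial f_i}{\partial\nu}$ makes sense in $L^2(\Gamma)$, which is guaranteed by elliptic regularity since $f_i \in D(-\Delta_D) \subset H^2(\Omega)$.
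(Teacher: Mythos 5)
Your argument is correct and tracks the paper's proof closely: both identify $PK_1 u = K_1 u - (-\Delta_D+1)^{-1}u$ by matching boundary value problems (the paper writes $PK_1u = v_0 + K_1 u$ with $v_0 = -(-\Delta_D+1)^{-1}u$; you write $K_1 u - PK_1 u = (-\Delta_D+1)^{-1}u$, the same thing), and both reduce $P_\infty LKLP_\infty$ to the matrix $M$ via the scalar action $L(z)P_\infty = \frac{1+\lambda_\infty}{\lambda_\infty - z}P_\infty$, the identification $\check H JK_1 f_i = -\frac{1}{1+\lambda_\infty}\frac{\partial f_i}{\partial\nu}$, and the cancellation of the $(1+\lambda_\infty)^2$ factors.

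One slip to flag in the exploratory paragraph you discarded: the chain
$$(\check H^{1/2}JK_1)^*\check H^{1/2}JK_1 u = (\check H JK_1)^*\check H JK_1 u$$
is false. The left-hand side is $D_\infty u = (JK_1)^*\check H JK_1 u$, while the right-hand side is $K u = (JK_1)^*\check H^2 JK_1 u$; they differ by a factor of $\check H$ in the middle. Since you abandon that route in favour of the direct boundary-value argument, your final proof is unaffected. A minor wording point: $P_\infty L(z) = L(z)P_\infty$ is best justified by observing that $L(z)$, being a function of $-\Delta_D$, commutes with the spectral projection $P_\infty$; invoking selfadjointness of $L(z)$ is not quite right, as $L(z)$ is not selfadjoint for non-real $z$.
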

\begin{proof} For every $\forall u\in L^2(\Omega), PK_1u$ is the unique solution
of the boundary value problem
\begin{eqnarray}
\left\{\begin{array}{ccc}
                         - \Delta v+v=0 & in & \Omega \\
                           v = K_1u & in & \Gamma \\
                         \end{array}\right.
\end{eqnarray}
Let $v_0$ be the unique solution of the equation $-\Delta v+v=-u$, in $H^2(\Omega)\cap H_0^1(\Omega)$, that is
$v_0=(-\Delta_D+1)^{-1}(-u)$. Then, $PK_1u$ is given by:
\begin{eqnarray}
PK_1u=v_0+K_1u=(-\Delta_N+1)^{-1}u-(-\Delta_D+1)^{-1}u,
\end{eqnarray}
yielding the first assertion.\\
Let $f_i,f_j$ be  eigenfunctions associated to the eigenvalue $\lambda_\infty$ of $-\Delta_D$. Since $
L(z)f_i=(\frac{\lambda_\infty+1}{\lambda_\infty-z})\,f_i$, straightforward computations yields
\begin{eqnarray}
\begin{array}{lll}
(P_\infty LKLP_\infty
f_i,f_j)&=&(KL(z)f_i,L({z})f_j)_{L^2(\Omega)}\\
&=&\ds(\frac{\lambda_\infty+1}{\lambda_\infty-z})^2(\check{H}JK_1f_i,\check{H}JK_1f_j)_{L^2(\Gamma)}\\
&=&\ds(\frac{\lambda_\infty+1}{\lambda_\infty-z})^2(\frac{\partial
(-\Delta_D+1)^{-1}f_i}{\partial\nu},\frac{\partial
(-\Delta_D+1)^{-1}f_j}{\partial\nu})_{L^2(\Gamma)}\\
&=&\ds\frac{1}{(\lambda_\infty-z)^2}(\frac{\partial
f_i}{\partial\nu},\frac{\partial f_j}{\partial\nu})_{L^2(\Gamma)},
\end{array}
\end{eqnarray}
and the proof is done.
\end{proof}
\begin{prop}
Let $(f_k)$ be an orthonormal basis of Dirichlet eigenfunctions,
$$-\Delta_D f_k=\la_k f_k,$$
and $Q$ be the operator given by Proposition \ref{proj asymp formula}.\\
For $f_i$ and $f_j$ in $E_\infty$ we set,
\begin{eqnarray}
a_{i,j,k}:=(\frac{\partial f_i}{\partial\nu},\frac{\partial
f_k}{\partial\nu})_{L^2(\Gamma)}({\frac{\partial
f_j}{\partial\nu},\frac{\partial f_k}{\partial\nu}})_{L^2(\Gamma)}
\end{eqnarray}
Then we obtain,\\
\begin{enumerate}
\item $(P_\infty LRLP_\infty f_i,f_j)=\ds\frac{1}{(\lambda_\infty-z)^2}%
(\frac{\partial}{\partial\nu}{\cal{R}}(\frac{\partial
f_i}{\partial\nu}),\frac{\partial
f_j}{\partial\nu})_{L^2(\Gamma)}$
\item $(P_\infty LKLKLP_\infty f_i,f_j)=\ds\frac{1}{(z-\la_\infty)^2}\sum_k
\frac{1}{(\la_k-z)(1+\la_k)}\,a_{i,j,k}$
\item $(P_\infty LKLQP_\infty
f_i,f_j)=\ds\sum_{f_k\in
E_\infty^\perp}\frac{1}{(\la_\infty-z)(\la_k-z)(\la_k-\la_\infty)}\,a_{i,j,k}$
\end{enumerate}
\end{prop}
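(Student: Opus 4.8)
The plan is to reduce everything to explicit computations using the two structural facts already at our disposal: the identity $(\check H JK_1)^*Jv = Pv$ (hence $(\check H^{3/2}JK_1)^*\psi = \mathcal R\psi$ via elliptic regularity of the harmonic extension), established in the proof of the first asymptotic formula, and the eigenvalue relation $L(z)f = \bigl(\tfrac{\lambda_\infty+1}{\lambda_\infty-z}\bigr)f$ for $f\in E_\infty$, together with Proposition \ref{dynkin}, which tells us that $\check H JK_1 f_i = \partial_\nu PK_1 f_i = -\partial_\nu (-\Delta_D+1)^{-1}f_i$, and that $\partial_\nu(-\Delta_D+1)^{-1}f_i = -\tfrac{1}{1+\lambda_\infty}\partial_\nu f_i$ (this last normalization comes from $(-\Delta_D+1)^{-1}f_i = (1+\lambda_\infty)^{-1}f_i$). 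So each factor $L$ acting on an element of $E_\infty$ just produces a scalar $\tfrac{\lambda_\infty+1}{\lambda_\infty-z}$, and the operator $R = (\check H^{3/2}JK_1)^*\check H^{3/2}JK_1$ satisfies $R f_i = \mathcal R\bigl(\partial_\nu\mathcal R(\partial_\nu f_i)\bigr)$ after suitably tracking the extra $\check H$ factor against $\check H^{-1}=K_\Om^\sigma$.

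For item (1): write $(P_\infty LRL P_\infty f_i, f_j) = \bigl(\tfrac{\lambda_\infty+1}{\lambda_\infty-z}\bigr)^2 (R f_i, f_j)$. Then use $R = (\check H JK_1)^*\check H^2 (\check H JK_1)$ in the form $(R f_i,f_j) = (\check H^2 JK_1 f_i, JK_1 f_j)_{L^2(\Gamma)}$; pairing $\check H$ against one copy of $(\check H JK_1)^* = $ "harmonic extension then restriction" and Green's formula converts the remaining $\check H$ into a normal derivative. Concretely $(\check H^{3/2}JK_1)^*(\check H^{3/2}JK_1 f_i) = \mathcal R\bigl(\check H JK_1 f_i\bigr)$ should be massaged to $\mathcal R(\partial_\nu \mathcal R(\partial_\nu f_i))$ using $\check H JK_1 f_i = \partial_\nu f_i$ up to the factor $(1+\lambda_\infty)^{-1}$, and the surviving scalar combined with $\bigl(\tfrac{\lambda_\infty+1}{\lambda_\infty-z}\bigr)^2$ collapses to $(\lambda_\infty-z)^{-2}$, leaving $(R f_i,f_j) = (\lambda_\infty-z)^{-2}(\partial_\nu\mathcal R(\partial_\nu f_i),\partial_\nu f_j)_{L^2(\Gamma)}$ after one more Green's formula to peel off the outer $\mathcal R$.

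For items (2) and (3): here the key extra ingredient is the spectral resolution $(-\Delta_D+1)^{-1} = \sum_k (1+\lambda_k)^{-1}(\cdot,f_k)f_k$ and the analogous resolvent $(-\Delta_D-z)^{-1}=\sum_k(\lambda_k-z)^{-1}(\cdot,f_k)f_k$, which feed into $L = 1+(1+z)(-\Delta_D-z)^{-1}$. For (2), insert a resolution of the identity $\sum_k(\cdot,f_k)f_k$ between the two $K$'s: $(P_\infty LKLKL P_\infty f_i,f_j) = \sum_k (KLf_k, Lf_i)\,\overline{(\cdot)}\cdots$ — more precisely expand $(KL P_\infty f_i, L(\cdot))$, use $L$ on $E_\infty$ gives the scalar, use $K f = (\check H JK_1)^*\check H JK_1 f$ with $\check H JK_1 f_k = -(1+\lambda_k)^{-1}\partial_\nu f_k$ so that $(K L f_i, f_k)$ produces $a_{i,\cdot,k}$-type factors with denominators $(1+\lambda_k)$, and $L f_k = \tfrac{1+\lambda_k}{\lambda_k - z}f_k$ supplies the remaining $(\lambda_k - z)^{-1}(1+\lambda_k)$. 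Collecting the scalars yields the stated sum. For (3), the only difference is that the middle factor is $Q$ rather than $LKL$; recall $Q = -\tfrac{1}{2i\pi}\int_{C(\lambda_\infty,\epsilon)} LKL\,dz$ and, since $P_\infty Q P_\infty = 0$, only the part of $KL P_\infty f_i$ lying in $E_\infty^\perp$ survives the first pairing — this is what forces the sum in (3) to run over $f_k\in E_\infty^\perp$ and introduces the extra denominator $(\lambda_k-\lambda_\infty)$ coming from the residue computation $-\tfrac{1}{2i\pi}\int \tfrac{dz}{(\lambda_\infty-z)(\lambda_k-z)}$ evaluated on the circle around $\lambda_\infty$.

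The main obstacle is bookkeeping rather than conceptual: one must consistently track (i) which copies of $\check H$ get absorbed into $\check H^{-1}=K_\Om^\sigma$ versus converted into normal derivatives via Green's formula, and (ii) the scalar factors $\tfrac{1+\lambda}{\lambda-z}$ and $(1+\lambda)^{-1}$ attached to each $L$ and each $(-\Delta_D+1)^{-1}$, since a single misplaced factor of $(1+\lambda_\infty)$ would spoil the clean normalization in item (1). I would organize the computation by first proving the operator identity $K f_k = (1+\lambda_k)^{-2}\mathcal R(\partial_\nu f_k)$ (up to sign) and the companion $R f_k = (1+\lambda_k)^{-1}\mathcal R(\partial_\nu\mathcal R(\partial_\nu f_k))$ as lemmas, after which (1)–(3) follow by pure scalar algebra plus, for (3), one contour-integral residue.
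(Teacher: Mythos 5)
Your proposal takes essentially the same route as the paper and is structurally sound. You correctly identify all the ingredients the paper uses: the scalar action $L(z)f=\tfrac{1+\lambda_\infty}{\lambda_\infty-z}f$ on $E_\infty$; the reduction $\check H JK_1 f_k=-(1+\lambda_k)^{-1}\partial_\nu f_k$ from Proposition \ref{dynkin}; the identification of $(\check H JK_1)^*$ with $\mathcal R$ on $H^{1/2}(\Gamma)$ so that $\check H$ factors can be traded for $\partial_\nu\mathcal R$ via Green's formula; the spectral expansion in the Dirichlet basis $(f_k)$ to turn items 2 and 3 into explicit sums; and the contour integral that kills the $E_\infty$ terms (double pole, zero residue) and contributes $(\lambda_k-\lambda_\infty)^{-1}$ on $E_\infty^\perp$. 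The paper organizes item 3 slightly differently, introducing $A(z,s)=P_\infty L(z)KL(z)L(s)KL(s)P_\infty$ and then integrating in $s$, whereas you first evaluate $Qf_i$ by the residue and then pair; these are the same computation.

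There is one concrete bookkeeping slip in the auxiliary lemma you propose at the end. You claim $Kf_k=(1+\lambda_k)^{-2}\mathcal R(\partial_\nu f_k)$ up to sign, but the correct identity is
$$Kf_k=-(1+\lambda_k)^{-1}\mathcal R(\partial_\nu f_k),$$
with exponent $-1$: the single power comes from $\check H JK_1 f_k=-(1+\lambda_k)^{-1}\partial_\nu f_k$, and the second factor $(\check H JK_1)^*\psi=\mathcal R\psi$ contributes no further scalar. The $(1+\lambda_k)^{-2}$ version would give a matrix element $(Kf_i,f_j)$ that is not symmetric in $(i,j)$, contradicting the self-adjointness of $K$; the correct value is $(Kf_i,f_j)=(1+\lambda_i)^{-1}(1+\lambda_j)^{-1}(\partial_\nu f_i,\partial_\nu f_j)_{L^2(\Gamma)}$, which is what the paper uses and what your in-text reasoning for item 2 (``denominators $(1+\lambda_k)$'') actually produces. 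So the concluding lemma is inconsistent with your own main argument and should be corrected to exponent $-1$; the companion statement $Rf_k=-(1+\lambda_k)^{-1}\mathcal R(\partial_\nu\mathcal R(\partial_\nu f_k))$ is fine. Once that is fixed, the rest of the bookkeeping you outline reproduces the stated formulae.
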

\begin{proof}
\begin{enumerate}
\item
\begin{eqnarray}
\begin{array}{lll}
(P_\infty LRLP_\infty
f_i,f_j)&=&\ds(RL(z)f_i,L({z})f_j)=(\frac{1+\lambda_\infty}{\lambda_\infty-z})^2(Rf_i,f_j)\\
&=&\ds(\frac{1+\lambda_\infty}{\lambda_\infty-z})^2(\check{H}^{3/2}JK_1f_i,\check{H}^{3/2}JK_1f_j)_{L^2(\Gamma)}\\
&=&\ds\frac{1}{(\lambda_\infty-z)^2}(\check{H}^{1/2}\,\frac{\partial f_i}{\partial\nu},\check{H}^{1/2}\,\frac{\partial f_j}{\partial\nu})_{L^2(\Gamma)}\\
&=&\ds\frac{1}{(\lambda_\infty-z)^2}(\check{H}\,\frac{\partial f_i}{\partial\nu},\frac{\partial f_j}{\partial\nu})_{L^2(\Gamma)}\\
&=&\ds\frac{1}{(\lambda_\infty-z)^2}(\frac{\partial}{\partial\nu}{\cal{R}}(\frac{\partial f_i}{\partial\nu}),\frac{\partial f_j}{\partial\nu})_{L^2(\Gamma)}.
\end{array}
\end{eqnarray}
In the last step we used the fact that for  $\varphi\in D(\check{H})$ we have $\check{H}\varphi=\frac{\partial
u}{\partial\nu}$ where $u={\cal{R}}\varphi$. Indeed, by the definition of ${\cal{R}}$,  ${\cal{R}}\varphi$  solves
\begin{eqnarray*}
\left\{\begin{array}{ccc}
                         - \Delta u+u=0 & in & \Omega \\
                           u = \varphi & in & \Gamma \\
                         \end{array}\right.
\end{eqnarray*}
\item Making use of Proposition \ref{dynkin}, an elementary computation yields
\begin{eqnarray}
(Kf_i,f_k)=(\check{H}JK_1f_i,\check{H}JK_1f_k)=(1+\la_i)^{-1}(1+\la_k)^{-1}(\frac{\partial
f_i}{\partial\nu},\frac{\partial f_k}{\partial\nu}).
\end{eqnarray}
Thus
\begin{eqnarray}
 Kf_i=\sum_k (1+\la_i)^{-1}(1+\la_k)^{-1}(\frac{\partial
f_i}{\partial\nu},\frac{\partial
f_k}{\partial\nu})f_k
\end{eqnarray}

and
\begin{eqnarray}
 L(z)Kf_i=\sum_k (1+\la_i)^{-1}(\la_k-z)^{-1}(\frac{\partial
f_i}{\partial\nu},\frac{\partial f_k}{\partial\nu})f_k.
\end{eqnarray}
Set $B=P_\infty H(z)KH(z)KH(z)P_\infty$. Then
\begin{eqnarray}
\begin{array}{lll}
(Bf_i,f_j)&=&\ds (\frac{1+\la_\infty}{z-\la_\infty})^2(KLKf_i,f_j)=(\frac{1+\la_\infty}{z-\la_\infty})^2(LKf_i,Kf_j)\\
&=&\ds \frac{1}{(z-\la_\infty)^2}\sum
\frac{1}{(\la_k-z)(1+\la_k)}(\frac{\partial
f_i}{\partial\nu},\frac{\partial
f_k}{\partial\nu})({\frac{\partial
f_j}{\partial\nu},\frac{\partial f_k}{\partial\nu}}).
\end{array}
\end{eqnarray}
\item Finally setting, $A(z,s)=P_\infty L(z)KL(z)L(s)KL(s)P_\infty$, we obtain
\begin{eqnarray}
\begin{array}{lll} (Af_i,f_j)&=&\ds
\frac{(1+\la_\infty)^2}{(\la_\infty-z)(\la_\infty-s)}(L(s)Kf_i,L({z})Kf_j)\\
&=&\ds \frac{1}{(\la_\infty-z)(\la_\infty-s)}\sum_k
\frac{1}{(\la_k-z)(\la_k-s)}(\frac{\partial
f_i}{\partial\nu},\frac{\partial
f_k}{\partial\nu})({\frac{\partial
f_j}{\partial\nu},\frac{\partial f_k}{\partial\nu}}).
\end{array}
\end{eqnarray}
Regarding the definition of $Q$, we achieve
\begin{eqnarray}
\begin{array}{lll} (P_\infty L(z)KL(z)QP_\infty
f_i,f_j)&=&-\ds\frac{1}{2i\pi}\int_{C(\lambda_\infty,\epsilon)}(A(z,s)f_i,f_j)\,d\sigma(s)\\
&=& \ds\sum_{f_k\in
E_\infty^\perp}\frac{(\la_k-\la_\infty)^{-1}}{(\la_\infty-z)(\la_k-z)}(\frac{\partial
f_i}{\partial\nu},\frac{\partial
f_k}{\partial\nu})({\frac{\partial
f_j}{\partial\nu},\frac{\partial f_k}{\partial\nu}})
\end{array}
\end{eqnarray}
 \end{enumerate}
\end{proof}
Now we are in position to establish the asymptotic of the eigenvalue of the Robin Laplacian.
\begin{theo}\label{asym expan of eigenvalues}
Let $\lambda_\infty$ be an eigenvalue of $-\Delta_D$ with multiplicity $m$ and eigenspace $E_\infty$. Then for sufficiently large $\beta$, the operator $H_\beta$ has exactly $m$ eigenvalues counted according to their multiplicities in $B(\lambda_\infty,\epsilon)$. These eigenvalues admit the asymptotic expansion
\begin{eqnarray}
\lambda_{i,j,\beta}=\lambda_\infty-\frac{1}{\beta}\,\alpha_i+\frac{1}{\be^2}\,\mu_{i,j}+o(\frac{1}{\beta^2}),
\end{eqnarray}
where $(\alpha_i)$ are the repeated eigenvalues of the matrix
$$M:=\left(\ds\int_\Gamma\frac{\partial
f_i}{\partial\nu}\overline{\frac{\partial
f_j}{\partial\nu}}\right)_{1\leq i,j\leq m}$$
in an orthonormal basis $(f_1,\cdots,f_m)$ of the eigenspace $E_\infty$.\\
Moreover, setting $P_i$ the eigenprojection associated to the eigenvalue $\alpha_i$ and $N$ the matrix given by
\begin{eqnarray}
N&:=&\big((\frac{\partial}{\partial\nu}{\cal{R}}(\frac{\partial f_i}{\partial\nu}),\frac{\partial
f_j}{\partial\nu})_{L^2(\Gamma)}+\frac{1}{1+\la_\infty}\sum_{f_k\in
E_\infty}a_{i,j,k}\nonumber\\
&+&\sum_{f_k\in
E_\infty^\perp}\frac{(1+\la_\infty)}{(1+\la_k)(\la_\infty-\la_k)}\,a_{i,j,k}\big)_{1\leq
i,j\leq m},
\end{eqnarray}
 then, $\mu_{i,j}\ , 1\leq j\leq \dim P_i $ are the
repeated eigenvalues of $P_iNP_i$ in the subspace $P_iE_\infty$.
\end{theo}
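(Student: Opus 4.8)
The plan is to combine the resolvent expansion of Proposition~\ref{second asymp formula} with analytic (Kato--Rellich) perturbation theory applied to the Riesz projections $P_\beta$, working on the circle $C(\lambda_\infty,\epsilon)$ where all the expansions are uniform. First I would reduce the counting statement: since $H_\beta\to-\Delta_D$ in norm resolvent sense and $\mathrm{spec}(-\Delta_D)\cap B(\lambda_\infty,\epsilon)=\{\lambda_\infty\}$ with $\dim E_\infty=m$, the norm convergence $\|P_\beta-P_\infty\|\to 0$ forces $\mathrm{rank}\,P_\beta=\mathrm{rank}\,P_\infty=m$ for $\beta$ large, so $H_\beta$ has exactly $m$ eigenvalues (with multiplicity) in $B(\lambda_\infty,\epsilon)$. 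Then the standard device is to study the finite--rank operator $\Lambda_\beta:=-\frac{1}{2i\pi}\int_{C(\lambda_\infty,\epsilon)} z\,(H_\beta-z)^{-1}\,dz$ (the ``weighted'' Riesz integral), whose eigenvalues in $\mathrm{Ran}\,P_\beta$ are exactly the $\lambda_{i,j,\beta}$ we want, and to transplant the problem to the fixed space $E_\infty$ via the transition operator. Concretely, set $\widetilde\Lambda_\beta:=P_\infty\Lambda_\beta P_\infty$ acting on $E_\infty$; using $\|P_\beta-P_\infty\|=O(1/\beta)$ one checks that the spectrum of $\widetilde\Lambda_\beta$ on $E_\infty$ agrees with that of $\Lambda_\beta$ on $\mathrm{Ran}\,P_\beta$ up to $o(1/\beta^2)$ (this is where $P_\infty QP_\infty=0$, from Proposition~\ref{proj asymp formula}, will be crucial to kill the would-be first-order cross terms).

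Next I would plug the expansion \eqref{second asymp formula} into the contour integral defining $\widetilde\Lambda_\beta$. Residue calculus at $z=\lambda_\infty$ gives
\begin{eqnarray*}
-\frac{1}{2i\pi}\int_{C(\lambda_\infty,\epsilon)} z\,(-\Delta_D-z)^{-1}\,dz=\lambda_\infty P_\infty,
\end{eqnarray*}
while the $\frac{1}{\beta}$-term contributes $-\frac{1}{2i\pi}\int z\,LKL\,dz$ compressed to $E_\infty$, which by Proposition~\ref{dynkin}(2) (taking the residue of $z/(\lambda_\infty-z)^2$ and noting the extra factor of $z$ shifts the computation by $\lambda_\infty$) evaluates to $-M$ on $E_\infty$ in the basis $(f_1,\dots,f_m)$; hence the first-order coefficient of $\lambda_{i,j,\beta}$ is $-\alpha_i$, the eigenvalues of $M$. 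Analyticity of $z\mapsto(H_\beta-z)^{-1}$ and uniformity of the remainder let me differentiate/integrate term by term, so the $o_s(1/\beta^2)$ remainder in \eqref{second asymp formula} integrates to an $o(1/\beta^2)$ error on the finite-dimensional space $E_\infty$. At second order, after using $P_\infty QP_\infty=0$ to discard the naive product term $P_\infty(P_\beta-P_\infty)\Lambda^{(1)}(P_\beta-P_\infty)$-type contributions, the surviving $\frac{1}{\beta^2}$-coefficient on $E_\infty$ is the compression to $E_\infty$ of
\begin{eqnarray*}
-\frac{1}{2i\pi}\int_{C(\lambda_\infty,\epsilon)} z\big(LRL-(1+z)LKLKL-\text{(correction from }P_\beta\text{)}\big)\,dz,
\end{eqnarray*}
and the three bracketed contour integrals are computed by the three formulae of the previous Proposition; summing the residues produces precisely the matrix $N$.

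Finally I would carry out the two-step (degenerate) perturbation argument: the first-order term restricted to $E_\infty$ is the self-adjoint matrix $M$, which need not be scalar, so its eigenvalues $\alpha_i$ split $E_\infty=\bigoplus_i P_iE_\infty$; on each eigenspace $P_iE_\infty$ the second-order coefficient is obtained by the usual reduction of second-order perturbation theory to the compressed matrix $P_iNP_i$ (the off-diagonal blocks of $N$ between different $\alpha_i$-eigenspaces drop out of the second-order eigenvalue formula), giving $\mu_{i,j}$ as the repeated eigenvalues of $P_iNP_i$ on $P_iE_\infty$. I expect the main obstacle to be the bookkeeping at second order: one must carefully track that the compression-to-$E_\infty$ operation together with the identities $P_\infty QP_\infty=0$ and $L(z)f_i=\frac{1+\lambda_\infty}{\lambda_\infty-z}f_i$ really eliminates all terms that are not already accounted for, and that the operator-valued remainder $o_s(1/\beta^2)$, once sandwiched by $P_\infty$ and integrated over the fixed compact contour, yields a genuine numerical $o(1/\beta^2)$ for each eigenvalue — this uses that $P_\infty$ has finite rank so strong and norm convergence coincide there. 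The contour integrals themselves are routine residue computations once the partial-fraction decompositions from the preceding Proposition are in hand.
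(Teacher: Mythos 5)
Your overall strategy coincides with the paper's: both transplant the finite\nobreakdash-rank operator $\Lambda_\beta = H_\beta P_\beta = -\frac{1}{2i\pi}\int_C z\,(H_\beta-z)^{-1}dz$ to the fixed space $E_\infty$, plug in the resolvent expansion, integrate by residues, and finish with a two\nobreakdash-step degenerate perturbation argument. The counting step and the first--order term are handled the same way. The paper, however, implements the transplantation via Kato's transition operator $A_\beta = 1-(P_\infty-P_\beta)P_\infty$ and studies $B_\beta = P_\infty A_\beta^{-1}H_\beta P_\beta A_\beta P_\infty$, which is an \emph{exact} similarity of $\Lambda_\beta|_{E_\beta}$ onto $E_\infty$, whereas you replace this by the naive compression $\widetilde\Lambda_\beta = P_\infty\Lambda_\beta P_\infty$.

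Here lies a genuine gap. Your intermediate claim that the spectrum of $\widetilde\Lambda_\beta$ on $E_\infty$ agrees with that of $\Lambda_\beta$ on $\mathrm{Ran}\,P_\beta$ ``up to $o(1/\beta^2)$'' is false; they agree only up to $O(1/\beta^2)$. Indeed, writing the inverse of $P_\infty|_{\mathrm{Ran}\,P_\beta}$ as $W^{-1}=P_\infty + X$ with $P_\infty X = 0$ and $X = \frac{1}{\beta}(1-P_\infty)QP_\infty + O(1/\beta^2)$, the exactly--similar operator is
\begin{eqnarray*}
P_\infty\Lambda_\beta W^{-1} = P_\infty\Lambda_\beta P_\infty + \frac{1}{\beta^2}\,P_\infty\Lambda^{(1)}QP_\infty + o\!\left(\tfrac{1}{\beta^2}\right),
\end{eqnarray*}
where $\Lambda^{(1)}=-\frac{1}{2i\pi}\int_C z\,LKL\,dz$; the identity $P_\infty QP_\infty=0$ kills the $\lambda_\infty P_\infty X$ and the $O(1/\beta)$ cross terms, but the $\frac{1}{\beta^2}P_\infty LKLQP_\infty$ contribution survives and is \emph{not} negligible. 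This is precisely the third ingredient (item 3) of the Proposition preceding Theorem~\ref{asym expan of eigenvalues}, which enters the definition of $N$. You do mention a ``correction from $P_\beta$'' in your displayed $\beta^{-2}$-coefficient, but you place it \emph{inside} the contour integral of the resolvent expansion, as if it were a term of $(H_\beta-z)^{-1}$; it is not --- it comes from the conjugating operator, i.e.\ from the product of the $\frac{1}{\beta}LKL$ block of the resolvent with the $\frac{1}{\beta}QP_\infty$ block of $A_\beta P_\infty$. The paper's use of $A_\beta$ and its Neumann--series inverse (where $P_\infty QP_\infty=0$ also makes $(QP_\infty)^2$ vanish, and the two $\pm\frac{1}{\beta^2}(\lambda_\infty-z)^{-1}P_\infty Q_1P_\infty$ terms cancel) produces this cross term cleanly and with exact justification. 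To make your argument correct you must either carry the transition operator explicitly, or prove the compression--versus--restriction estimate to order $\beta^{-2}$ and add the computed $O(1/\beta^2)$ discrepancy to $N$ --- which, once written out, is exactly Kato's method.
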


\begin{proof}
Following Kato's method (see \cite{Kato}), we introduce the
$$A_\beta:=1-P_\infty+P_\beta P_\infty=1-(P_\infty-P_\beta)P_\infty.$$
For large $\beta$ the operator $A_\beta$ is invertible and maps
$E_\infty$ onto $E_\beta$ since $\|P_\beta-P_\infty\|$ is small, and leaves the orthogonal of $E_\infty$ invariant.\\
Using Proposition \ref{proj asymp formula}, we obtain the asymptotic expansion for $A_\beta$:
\begin{eqnarray}
A_\beta=1+\frac{1}{\beta}\,QP_\infty-\frac{1}{\beta^2}\,Q_1P_\infty+o_s(\frac{1}{\beta^2})
\end{eqnarray}
Since, $P_\infty Q P_\infty=0$ it follows that,
\begin{eqnarray}
\begin{array}{lll}
A_\beta^{-1}&=&\ds 1-(\frac{1}{\beta}\,QP_\infty-\frac{1}{\beta^2}\,Q_1P_\infty)+(\frac{1}{\beta}\,QP_\infty-\frac{1}{\beta^2}\,Q_1P_\infty)^2+o_s(\frac{1}{\beta^2})\\
&=&\ds
1-\frac{1}{\beta}\,QP_\infty+\frac{1}{\beta^2}\,Q_1P_\infty+o_s(\frac{1}{\beta^2})
\end{array}
\end{eqnarray}
Now we define the operator $B_\beta$ as
$$B_\beta:=P_\infty A_\beta^{-1}(-\Delta_\beta)A_\beta P_\infty.$$
Obviously $B_\beta$ belongs to is bounded and has finite rank. Furthermore, the repeated eigenvalues of $(-\Delta_\beta)$
considered in the m-dimensional subspace $E_\beta$ are equal to
the eigenvalues of $H_\beta P_\beta$ in $E_\beta$ and
therefore also to those of $A_\beta^{-1}H_\beta A_\beta$
which is similar to $H_\beta P_\beta$ in $E_\beta$.\\
Thus, taking into account that $P_\infty Q P_\infty=0$ and that
$P_\infty$ commutes with $(-\Delta_D-z)^{-1}$, we obtain the asymptotic expansion:
\begin{eqnarray}\label{asym expan of B}
P_\infty A_\beta^{-1}(H_\beta-z)^{-1} A_\beta P_\infty
&=& (P_\infty+\frac{1}{\beta^2}P_\infty Q_1P_\infty+o_s(\frac{1}{\beta^2}))\nonumber\\
&\cdot& ((-\Delta_D-z)^{-1}+\frac{1}{\beta}LKL-\frac{1}{\be^2}(LRL-(1+z)LKLKL))\nonumber\\
& \cdot &  (1+\frac{1}{\beta}\,QP_\infty-\frac{1}{\be^2}Q_1P_\infty+o_s(\frac{1}{\beta^2}))P_\infty\nonumber\\
& = &  P_\infty
(-\Delta_D-z)^{-1}P_\infty+\frac{1}{\beta}P_\infty
LKLP_\infty\nonumber\\
&-&\frac{1}{\be^2}(P_\infty LRLP_\infty-(1+z)P_\infty
LKLKLP_\infty)\nonumber\\
&+&\frac{1}{\be^2}P_\infty Q_1(-\Delta_D-z)^{-1}P_\infty-\frac{1}{\be^2}P_\infty(-\Delta_D-z)^{-1}Q_1P_\infty \nonumber\\
&+& \frac{1}{\be^2}P_\infty LKLQP_\infty+o_u(\frac{1}{\beta^2}) \nonumber\\
& = &
(\lambda_\infty-z)^{-1}P_\infty+\frac{1}{\beta}(\lambda_\infty-z)^{-2}MP_\infty
-\ds\frac{1}{\be^2}P_\infty LRLP_\infty \nonumber\\
&+& \frac{1}{\be^2}((1+z)P_\infty LKLKLP_\infty+P_\infty
LKLQP_\infty)\nonumber\\
&+&o_u(\frac{1}{\beta^2}).
\end{eqnarray}
Here we have used the fact that $\ds
o_s(\frac{1}{\be^2})P_\infty=o_u(\frac{1}{\be^2})$ because
$P_\infty$ has finite rank.\\
Since
\begin{eqnarray}
(-\Delta_\beta)P_\beta=-\frac{1}{2i\pi}\int_{C(\lambda_\infty,\epsilon)}z\,(-\Delta_\beta-z)^{-1}\,dz,
\end{eqnarray}
integration of (\ref{asym expan of B}) along the circle
$C(\lambda_\infty,\epsilon)$ after multiplication by $(-z/2i\pi)$
and by an elementary calculation of residues at the singularity
$\la_\infty$ we obtain,
\begin{eqnarray}
B_\beta=P_\infty A_\beta^{-1}(-\Delta_\beta)P_\beta A_\beta
P_\infty=\lambda_\infty
P_\infty-\frac{1}{\beta}MP_\infty+\frac{1}{\be^2}NP_\infty+o_u(\frac{1}{\beta^2}).
\end{eqnarray}
Theorem \ref{asym expan of eigenvalues} is then a consequence of
the well known results on finite dimensional spaces (see
\cite{Kato}).

\end{proof}
\section{Example: The case of the unit disc in $\R^2$}
Let $D$ be the unit disc and ${C}$ be its boundary (the
unit circle). First, we study the solutions of the eigenvalue
problem $-\Delta f = \lambda f$ with either Dirichlet or  Neumann boundary conditions.\\
By separating variables it turns out that the solutions of the equation $-\Delta f =
\lambda f$ are given by
\begin{eqnarray}
 J_n(\sqrt{\lambda}r)e^{\pm in\theta},\ n\in\N,
\end{eqnarray}
 where the $J_n$'s are  Bessel functions of the first kind.\\
If $J_n(\sqrt{\lambda})=0$, then $\lambda$ is an eigenvalue of the
Dirichlet Laplacian on $D$ with eigenfunctions
$J_n(\sqrt{\lambda}r)e^{\pm in\theta}$. As every $J_n$ has
infinitely many positive solutions,  we shall order them as
follows $0 < k_{n,1}< k_{n,2}< \cdots <k_{n,m} < \cdots\ \ n\in\N$.\\
Therefore the eigenvalues of the Dirichlet Laplacian on the unit disc  are
given by
\begin{eqnarray}
 \lambda_{n,m}=k_{n,m}^2,\ \ n\in\N,\
m\geq 1.
\end{eqnarray}
with\ associated\ eigenfunctions
\begin{eqnarray}\label{Dir.eig.funct}
   \varphi_{n,m}^{\pm}(r,\theta)=J_n(k_{n,m}r)e^{\pm in\theta},\ \ n\in\N,\
m\geq 1.
\end{eqnarray}
The Neumann eigenvalues are characterized by the equation $\sqrt\lambda J_n'(\sqrt\lambda)=0,\ \lambda\geq 0$. As before we order the zeros of each $J_n'$ in an increasing order
\begin{eqnarray}
0 < k'_{n,1}< k'_{n,2}< \cdots <k'_{n,m} < \cdots,\ \ n\geq 1\\
0= k'_{0,1}< k'_{0,2}< \cdots <k'_{0,m} < \cdots
\end{eqnarray}
Thus the eigenvalues of the Neumann Laplacian on the unit disc are given by
\begin{eqnarray}
\mu_{n,m}=k_{n,m}'^2,\ n\in\N,\ ,m\geq 1,
\end{eqnarray}
with associated eigenfunctions,
\begin{eqnarray}
\psi_{n,m}^\pm(r,\theta)=J_n(k'_{n,m}r)e^{\pm in\theta},\ \ n\in\N,\
m\geq 1
\end{eqnarray}
By using the formula,
\begin{eqnarray}\label{bessel int}\ds\int_0^1
J_n^2(cr)r\,dr=\frac{1}{2}J_n'^2(c)+\frac{1}{2}(1-\frac{n^2}{c^2})J_n^2(c),
\end{eqnarray}
the normalized Neumann eigenfunctions associated to the eigenvalue
$\mu_{n,m}=k_{n,m}'^2$ are given by,
\begin{eqnarray}
\Psi_{n,m}^\pm(r,\theta)=\pi^{-1/2}(1-\frac{n^2}{k_{n,m}'^2})^{-1/2}\,\frac{J_n(k'_{n,m}r)}{J_n(k'_{n,m})}\,e^{\pm
in\theta},\ \ n\in\N,\ m\geq 1
\end{eqnarray}
From now on the notation $\sum_{n}\cdots(g_n^\pm,\cdot)f_n^\pm$ means
$$
\sum_{n}\cdots(g_n^+,\cdot)f_n^+ + \sum_{n}\cdots(g_n^-,\cdot)f_n^-.
$$
Thus, by the spectral calculus we obtain
\begin{eqnarray}
K_1=(-\Delta_N+1)^{-1}=\sum_{n,m}(1+k_{n,m}'^2)^{-1}(\Psi_{n,m}^\pm,\cdot)\Psi_{n,m}^\pm\\
JK_1=\sum_{n,m}\pi^{-1/2}(1+k_{n,m}'^2)^{-1}(1-\frac{n^2}{k_{n,m}'^2})^{-1/2}(\Psi_{n,m}^\pm,\cdot)e^{\pm
in\theta}\\
\label{jk1}
(JK_1)^*=\sum_{n,m}\pi^{-1/2}(1+k_{n,m}'^2)^{-1}(1-\frac{n^2}{k_{n,m}'^2})^{-1/2}(e^{\pm
in\theta},\cdot)\Psi_{n,m}^\pm,
\label{jk1Star}
\end{eqnarray}
yielding
\begin{eqnarray}
(JK_1)^*e^{\pm
in\theta}=\sum_{m\geq1}2\pi^{1/2}(1+k_{n,m}'^2)^{-1}(1-\frac{n^2}{k_{n,m}'^2})^{-1/2}\,\Psi_{n,m}^\pm\\
\|(JK_1)^*e^{\pm in\theta}\|_{L^2(D)}^2=\sum_{m\geq
1}4\pi(1+k_{n,m}'^2)^{-2}(1-\frac{n^2}{k_{n,m}'^2})^{-1}\label{norm
JK_1}
\end{eqnarray}
Let us now compute the operator $\cH$.\\
An elementary computation yields that the solution of the boundary value problem,
\begin{eqnarray}
\left\{\begin{array}{ccccc}
-\Delta u+u & = & 0 & \mbox{in} & D \\
u & = & e^{\pm in\theta} & \mbox{on} & C\\
\end{array}\right.
\end{eqnarray}
is given by,
\begin{eqnarray}
u_n(r,\theta)=\frac{J_n(ir)}{J_n(i)}\,e^{\pm in\theta}
\end{eqnarray}
Hence, the functions $e^{\pm in\theta},\ n\in\N$ belong to the domain of $\cH$, which we denote by $D(\cH)$ and
\begin{eqnarray}
\check{H}e^{\pm in\theta}=\frac{\partial
u_n}{\partial\nu}=\frac{\partial u_n(r,\theta)}{\partial
r}\rfloor_{r=1}=i\frac{J'_n(i)}{J_n(i)}\,e^{\pm in\theta}
\end{eqnarray}
That is, the eigenvalues of $\check{H}$ are
\begin{eqnarray}
\check{\lambda}_n=i\frac{J'_{n}(i)}{J_{n}(i)}\ {\rm with\ respective\ associated\  eigenfunctions}\  e^{\pm in\theta},\ \forall n\in\mathbb{N}.
\label{eigen-check}
\end{eqnarray}
Observe that each eigenvalue is a double eigenvalue except $\check{\lambda}_0$.\\
Set $L^2(C):=L^2(C,\frac{d\theta}{2\pi})$, then
\begin{eqnarray}
D(\cH)&=&\big\{\varphi\in L^2(C)\colon\, \sum_{n\in\N}\check{\lambda}_n^2|(\varphi,e^{in\theta})_{L^2(C)}|^2<\infty\big\}\nonumber\\
&& \cH\varphi = \sum_{n\in\N}\check{\lambda}_n(\varphi,e^{in\theta})_{L^2(C)}e^{in\theta},\ \forall\,\varphi\in D(\cH).
\label{domaine}
\end{eqnarray}
In other words, if we set $(c_n)_{n\in\Z}$ the Fourier coefficients of $\varphi\in L^2(C)$ and since we consider real-valued functions, then $\varphi\in D(\cH)$ if and only if
\begin{eqnarray}
\sum_{n\in\N}\check{\lambda}_n^2 |c_n|^2<\infty.
\end{eqnarray}
This observation leads to a full description of $D(\cH)$:
\begin{prop}
\begin{enumerate}
\item For each $n\in\N$, we have $n<\check{\lambda}_n<n+1/2$.
\item It follows that $\varphi\in L^2(C)$ belongs to $D(\cH)$ if and only if
$$
\sum_{n\in\N}n^2|c_n|^2<\infty.
$$
\end{enumerate}
\label{Op-check}
\end{prop}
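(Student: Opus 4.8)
The plan is to pass to the modified Bessel functions $I_n$, reduce assertion~1 to one recurrence identity plus an elementary power-series comparison, and then read off assertion~2 from the description (\ref{domaine}) of $D(\cH)$.

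First I would rewrite the eigenvalues. Since $J_n(iz)=i^{n}I_n(z)$, formula (\ref{eigen-check}) becomes
\[
\check{\lambda}_n=i\,\frac{J_n'(i)}{J_n(i)}=\frac{I_n'(1)}{I_n(1)}.
\]
All Taylor coefficients of $I_n(x)=\sum_{k\geq0}\frac{(x/2)^{2k+n}}{k!\,(k+n)!}$ are nonnegative, so $I_n(x)>0$ and $I_n'(x)>0$ for $x>0$; in particular $\check{\lambda}_n\in\R$ and $\check{\lambda}_n>0$. Using the standard recurrence $I_n'(x)=I_{n+1}(x)+\tfrac{n}{x}I_n(x)$ (which for $n=0$ reads $I_0'=I_1$) gives
\[
\check{\lambda}_n=n+\frac{I_{n+1}(1)}{I_n(1)},
\]
and positivity of $I_n$ and $I_{n+1}$ already yields the lower bound $\check{\lambda}_n>n$ for every $n\in\N$.

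The substance of assertion~1 is the upper bound, i.e.\ $I_{n+1}(1)<\tfrac12 I_n(1)$. I would prove this directly from the series: pulling out the common factor $2^{-n}$,
\[
I_n(1)-2\,I_{n+1}(1)=2^{-n}\sum_{k\geq0}\frac{(1/4)^k}{k!}\left(\frac{1}{(k+n)!}-\frac{1}{(k+n+1)!}\right)=2^{-n}\sum_{k\geq0}\frac{(1/4)^k\,(k+n)}{k!\,(k+n+1)!}\,,
\]
which is a sum of nonnegative terms, strictly positive already at $k=1$. Hence $0<\frac{I_{n+1}(1)}{I_n(1)}<\tfrac12$, so $n<\check{\lambda}_n<n+\tfrac12$, proving assertion~1. (Alternatively one may note that $\rho_n(x):=xI_n'(x)/I_n(x)$ solves the Riccati equation $x\rho_n'(x)=x^2+n^2-\rho_n(x)^2$ with $\rho_n(0^+)=n$, from which a barrier argument gives $n<\rho_n(1)<\sqrt{1+n^2}<n+\tfrac12$ for $n\geq1$; but the case $n=0$ still needs the series estimate, so the computation above is the shorter route.)

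For assertion~2, recall from (\ref{domaine}) that $\varphi\in L^2(C)$ with Fourier coefficients $(c_n)$ lies in $D(\cH)$ iff $\sum_{n\in\N}\check{\lambda}_n^2|c_n|^2<\infty$. By assertion~1, $n^2<\check{\lambda}_n^2<(n+\tfrac12)^2\leq\tfrac94 n^2$ for $n\geq1$, so $\check{\lambda}_n^2$ is comparable to $n^2$ uniformly in $n\geq1$; the term $n=0$ contributes $\check{\lambda}_0^2|c_0|^2<\infty$ automatically since $\varphi\in L^2(C)$, and the $n=0$ term of $\sum_n n^2|c_n|^2$ vanishes. Hence $\sum_n\check{\lambda}_n^2|c_n|^2<\infty\iff\sum_n n^2|c_n|^2<\infty$, which is the claim (so that $D(\cH)=H^1(C)$, as expected for a Dirichlet--to--Neumann operator). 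The one genuinely delicate point is the sharp constant $\tfrac12$: bounding $\check{\lambda}_n$ by $n+1$ is trivial, and the displayed power-series identity is exactly what upgrades this to $n+\tfrac12$; I expect that to be the only real obstacle.
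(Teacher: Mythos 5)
Your proof is correct and takes essentially the same route as the paper: the paper writes $\check{\lambda}_n = n - iJ_{n+1}(i)/J_n(i)$ via the recursion $J_n'(z)=\tfrac{n}{z}J_n(z)-J_{n+1}(z)$ and then bounds the ratio using the power series for $J_n(i)$, which is exactly your computation transported through $J_n(iz)=i^nI_n(z)$. The only (cosmetic) difference is that you phrase it in terms of $I_n$ and you spell out the reduction of assertion 2 to assertion 1, which the paper leaves implicit.
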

\begin{proof}
The second assertion follows from the first one, which we proceed to prove.\\
From the recursion relations between  Bessel functions and their derivatives one has
\begin{eqnarray}
\check{\lambda}_n=i\frac{J'_{n}(i)}{J_{n}(i)}=n-i\frac{J_{n+1}(i)}{J_n(i)}\
\ \forall n\in\N.
\end{eqnarray}
Since $J_n(i)=(\frac{i}{2})^n\sum_{k=0}^\infty\frac{1}{2^{2k}k!(n+k)!}\ \
\forall n\in\N$ it follows that
\begin{eqnarray}
n<\check{\lambda}_n<n+\frac{1}{2},\ \ \forall n\in\N,
\end{eqnarray}
which finishes the proof.
\end{proof}
Now we turn our attention to compute explicitly the operators $\cH^sJK$, as they are involved in the trace-class convergence as well as in the asymptotic developments. Especially we shall prove that the limiting exponent $r=1$ in Proposition \ref{2D} is excluded.\\
Let $s\in(0,3/2]$. Relying on formulae (\ref{jk1})--(\ref{domaine}) and owing to the fact that $\cH^{3/2} JK_1$ is bounded, we obtain
\begin{eqnarray}
\check{H}^{s}JK_1&=&\ds\sum_{n\in\mathbb{N},m\geq1}\frac{2\pi^{1/2}\check{\lambda}_n^s k_{n,m}'}{(1+k_{n,m}'^2)({k_{n,m}'^2}-n^2)^{1/2}}(\Psi_{n,m}^\pm,\cdot)e^{\pm in\theta}\nonumber\\
&=&\sum_{n\in\mathbb{N},m\geq1}\check{\lambda}_n^s\theta_{n,m}(\Psi_{n,m}^\pm,\cdot)e^{\pm in\theta}
=\sum_{n\in\mathbb{N}}\check{\lambda}_n^s(\tilde{\Psi}_{n}^\pm,\cdot)e^{\pm in\theta}
\label{composition}
\end{eqnarray}
where
\begin{eqnarray}
\theta_{n,m}:=  \frac{2\pi^{1/2} k_{n,m}'}{(1+k_{n,m}'^2)({k_{n,m}'^2}-n^2)^{1/2}},\
 \tilde{\Psi}_{n}^\pm:=\sum_{m\geq1}\theta_{n,m}\Psi_{n,m}^\pm,
 %
\end{eqnarray}
Let us note that the family $\Psi_n^\pm$ is orthogonal in $L^2(D)$. Hence setting
\begin{eqnarray}
\gamma_n^2:=\|\tilde{\Psi}_{n}^\pm\|_{L^2(D)}^2=\sum_{m\geq1}\theta_{n,m}^2,\
\phi_n^{\pm}:=\gamma_n^{-1}\tilde{\Psi}_n^\pm,
\end{eqnarray}
we obtain that
\begin{eqnarray}
\check{H}^{s}JK_1=\sum_{n\in\mathbb{N}}\check{\lambda}_n^s\gamma_n(\phi_{n}^\pm,.)e^{\pm in\theta},\
(\check{H}^{s}JK_1)^* \check{H}^{s}JK_1=\sum_{n\in\mathbb{N}}\check{\lambda}_n^{2s}\gamma_n^{2}(\phi_{n}^\pm,.)\phi_n^{\pm}.
\label{AdjComp}
\end{eqnarray}
In particular we derive:
\begin{prop}
\begin{enumerate}
\item The following representation for $D_\infty$ holds true
\begin{eqnarray}
D_\infty= (-\Delta_N+1)^{-1} - (-\Delta_D + 1)^{-1}=\sum_{n\in\mathbb{N}}\check{\lambda}_n^{}\gamma_n^{}(\phi_{n}^\pm,.)\phi_n^{\pm}.
\label{DinfRep}
\end{eqnarray}
\item
$$
\lim_{\beta\to\infty}\beta\|D_\beta - D_\infty\|=\sum_{n\in\mathbb{N}}\check{\lambda}_n^{2}\gamma_n^{2}.
$$
\end{enumerate}
\end{prop}
\begin{proof}
Claim 1. is consequence of formulae (\ref{Dinf})-(\ref{AdjComp}), whereas claim 2. comes from Theorem \ref{UniConv} together with (\ref{composition}).
\end{proof}
Now we proceed to prove  that trace-class convergence with maximal rate, i.e. a rate proportional to $1/\beta$ does not hold true.
\begin{theo}
The operator $\cH JK_1$ is not a Hilbert--Schmidt operator. Consequently
\begin{eqnarray}
\lim_{\beta\to\infty}\beta\|D_\beta - D_\infty\|_{S_1}=\infty.
\end{eqnarray}
\label{negative}
\end{theo}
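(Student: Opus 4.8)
The plan is to exhibit explicitly the singular-value sequence of $\cH JK_1$ using the diagonal representation already derived. Taking $s=1$ in formula (\ref{AdjComp}), we have
$$
\cH JK_1=\sum_{n\in\mathbb{N}}\check{\lambda}_n\gamma_n(\phi_n^\pm,\cdot)e^{\pm in\theta},
$$
and since the $\phi_n^\pm$ are orthonormal in $L^2(D)$ and the $e^{\pm in\theta}$ orthogonal in $L^2(C)$, the Hilbert--Schmidt norm is (up to a fixed constant coming from normalizing $e^{\pm in\theta}$ in $L^2(C)$)
$$
\|\cH JK_1\|_{S_2}^2 = \text{const}\cdot\sum_{n\in\mathbb{N}}\check{\lambda}_n^2\gamma_n^2,
$$
with the double multiplicity for $n\geq 1$. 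So the whole statement reduces to showing that $\sum_n \check{\lambda}_n^2\gamma_n^2=\infty$. By Proposition \ref{Op-check} we have $\check{\lambda}_n\sim n$, so it suffices to prove $\sum_n n^2\gamma_n^2=\infty$, i.e. that $\gamma_n^2=\sum_{m\geq 1}\theta_{n,m}^2$ does not decay faster than $n^{-2}$ in an averaged/summable sense; in fact I expect $\gamma_n^2$ to behave like a constant times $n^{-1}$ for large $n$, which would make the series diverge like $\sum n$.

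First I would obtain a clean lower bound for $\gamma_n^2$. The obstacle is that $\theta_{n,m}$ involves the positive zeros $k'_{n,m}$ of $J_n'$, so I need asymptotics of these zeros. The simplest route is to keep only the first zero $m=1$: one has $\theta_{n,m}^2 = \dfrac{4\pi\, k_{n,m}'^2}{(1+k_{n,m}'^2)^2(k_{n,m}'^2-n^2)}$, so $\gamma_n^2\geq \theta_{n,1}^2$. The classical asymptotics (McMahon-type, or the bound $k'_{n,1}> n$ together with $k'_{n,1}=n + c\, n^{1/3}+o(n^{1/3})$ for the first zero of $J_n'$) give $k_{n,1}'^2 - n^2 = O(n^{4/3})$ and $k_{n,1}'^2\sim n^2$, hence
$$
\theta_{n,1}^2 \gtrsim \frac{n^2}{n^4\cdot n^{4/3}} = n^{-10/3},
$$
which unfortunately only yields $\sum n^2\theta_{n,1}^2\gtrsim\sum n^{-4/3}<\infty$. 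So a single zero is not enough, and the divergence must come from summing over all $m$. I would therefore estimate $\gamma_n^2$ by summing the tail: for $m$ such that $k'_{n,m}\geq 2n$ one has $k_{n,m}'^2-n^2\geq \tfrac34 k_{n,m}'^2$, so $\theta_{n,m}^2\asymp k_{n,m}'^{-2}(1+k_{n,m}'^2)^{-1}$; using that the positive zeros of $J_n'$ satisfy $k'_{n,m}\asymp m$ for $m\gg n$ (and are spaced $\asymp 1$ apart), the contribution of these $m$ is $\asymp \sum_{m\gtrsim n} m^{-4}\asymp n^{-3}$, still giving a convergent series. Hence the real contribution is the ``intermediate'' range $n\lesssim k'_{n,m}\lesssim 2n$: there $\theta_{n,m}^2\asymp \dfrac{n^2}{n^4(k_{n,m}'^2-n^2)}=\dfrac{1}{n^2(k_{n,m}'^2-n^2)}$, and writing $k'_{n,m}=n+t_m$ with $t_m$ ranging over $[c n^{1/3},\, n]$ roughly unit-spaced, $\sum_m \theta_{n,m}^2\asymp \dfrac{1}{n^3}\sum_{1\le t\lesssim n} \dfrac1t \asymp \dfrac{\log n}{n^3}$. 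This still looks too small.

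Given that a direct size estimate of $\gamma_n$ seems to undershoot, the cleaner approach — and the one I would actually pursue — is to avoid computing $\gamma_n$ at all and instead use that $\cH JK_1$ is Hilbert--Schmidt iff its kernel is in $L^2(C\times D)$, equivalently that the operator $\cH^{2}(JK_1)(JK_1)^*= \cH^{2}\cdot JK_1K_1J^*$ is trace class on $L^2(C)$. Now $JK_1K_1J^* = J(-\Delta_N+1)^{-2}J^*$ has, by the spectral representation (\ref{jk1Star}), the diagonal form $\sum_{n,m} 4\pi (1+k_{n,m}'^2)^{-2}(1-n^2/k_{n,m}'^2)^{-1}(e^{\pm in\theta},\cdot)e^{\pm in\theta}$, i.e. it is diagonal in the Fourier basis with eigenvalue $\mu_n:=4\pi\sum_{m\geq1}(1+k_{n,m}'^2)^{-2}(1-n^2/k_{n,m}'^2)^{-1}$ (note $\mu_n=\gamma_n^2$ up to the normalization constant), and $\cH$ is also diagonal there with eigenvalue $\check\lambda_n\asymp n$. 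So tracial summability is again $\sum_n n^2\mu_n<\infty$, the same condition. The efficient way to see this diverges: compare $\mu_n=\|(JK_1)^*e^{\pm in\theta}\|^2$ (formula (\ref{norm JK_1})) from \emph{below} by a \emph{single} well-chosen term $m=m(n)$ with $k'_{n,m(n)}\asymp n$, for which the factor $(k_{n,m}'^2-n^2)^{-1}$ is \emph{largest}. The first zero gives $k_{n,1}'^2-n^2\asymp n^{4/3}$, hence $\mu_n\gtrsim (1/n^2)^2\cdot n^{-4/3}\cdot n^2\cdot$... let me recompute: $\mu_n\gtrsim 4\pi(1+k_{n,1}'^2)^{-2}(1-n^2/k_{n,1}'^2)^{-1}=4\pi(1+k_{n,1}'^2)^{-2}\cdot k_{n,1}'^2/(k_{n,1}'^2-n^2)\asymp n^{-4}\cdot n^2/n^{4/3}=n^{-10/3}$, so $\sum n^2\mu_n\gtrsim\sum n^{-4/3}$, convergent.

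The hard part, and the genuine obstacle, is therefore precisely this: a naive term-by-term lower bound converges, so one must capture the \emph{accumulated} contribution of the zeros $k'_{n,m}$ lying just above $n$. The correct tool is the uniform (Olver-type) asymptotics of $J_n$ and $J_n'$ near the turning point $r=n$, or equivalently the interlacing and counting of zeros of $J_n'$ in $(n, n+an)$: the number of such zeros with $k'_{n,m}-n\le n^{1/3}\tau$ grows like $\tau^{3/2}$, and each contributes $\theta_{n,m}^2\asymp n^{-2}(k_{n,m}'^2-n^2)^{-1}\asymp n^{-3}(k'_{n,m}-n)^{-1}$. Summing: $\sum n^2\theta_{n,m}^2\asymp n^{-1}\sum_{k'_{n,m}>n}(k'_{n,m}-n)^{-1}$, and because the zeros beyond $n$ are eventually spaced $\asymp 1$ up to distance $\asymp n$, $\sum_{k'_{n,m}>n,\ k'_{n,m}<2n}(k'_{n,m}-n)^{-1}\asymp \log n$ near the edge is an underestimate; including the unit-spaced bulk one gets a bounded-below constant, so in fact $\sum_m n^2\theta_{n,m}^2\gtrsim n^{-1}\cdot(\text{const})$ — no, that still gives convergence. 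I must be missing the dominant scale. Rather than force an incorrect heuristic into the paper, I would structure the actual proof as: (i) reduce, via (\ref{AdjComp}) and Proposition \ref{Op-check}, the Hilbert--Schmidt property of $\cH JK_1$ to finiteness of $\sum_n n^2\gamma_n^2$ with $\gamma_n^2=\sum_{m\ge1}\theta_{n,m}^2$; (ii) invoke the trace-class criterion of \cite[Theorem 2.3]{bbb12}, by which $\beta\|D_\beta-D_\infty\|_{S_1}$ stays bounded iff $\cH JK_1\in S_2$, so the two assertions of the theorem are equivalent; (iii) establish $\sum_n n^2\gamma_n^2=\infty$ by a careful lower estimate using the uniform asymptotics of Bessel functions at the turning point — concretely, bounding $\gamma_n^2$ below by the sum of $\theta_{n,m}^2$ over the $\asymp n^{1/3}$-many zeros $k'_{n,m}\in(n, n+c)$ for a fixed small constant $c$, each of which is $\gtrsim n^{-4}\cdot n^2\cdot n^{-1/3}$... and verifying that this aggregate is $\gtrsim n^{-2}$ (so that $n^2\gamma_n^2\gtrsim 1$ and the series diverges). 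Pinning down that turning-point estimate is the one step that requires real work; everything else is bookkeeping with the already-established diagonalizations.
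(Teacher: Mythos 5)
Your reduction is exactly the paper's: identify $\|\cH JK_1\|_{S_2}^2$ (up to a normalization constant) with $\sum_n \check\lambda_n^2\gamma_n^2$, note $\check\lambda_n\asymp n$, and then estimate $\gamma_n^2=\sum_{m\ge1}\theta_{n,m}^2$ from below using the Wong-type bounds on the zeros $k'_{n,m}$. You also correctly invoke \cite[Theorem 2.3]{bbb12} to see that the two assertions of the theorem are equivalent. So up to step (iii) the structure is right.

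The gap is a plain arithmetic one, and it recurs three times in your write-up. You computed, via the tail $k'_{n,m}\gtrsim 2n$, that $\gamma_n^2\gtrsim n^{-3}$, and via the intermediate range that $\gamma_n^2\gtrsim (\log n)/n^3$ — and in both cases you declared ``still a convergent series.'' But
\begin{equation*}
\sum_{n\ge1} n^2\cdot n^{-3}=\sum_{n\ge1}\frac1n=\infty,
\end{equation*}
the harmonic series. So the bound $\gamma_n^2\gtrsim c/n^3$, which you produced twice, is already sufficient for divergence. You abandoned a working estimate because you misjudged the threshold: you later decide you need $\gamma_n^2\gtrsim n^{-2}$, but $\gtrsim c\,n^{-3}$ is all that is needed. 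The paper's proof is precisely this: the bound on the zeros $k'_{n,m}$ is fed into an integral comparison that yields $\sum_{m\ge2} k_{n,m}'^{-2}(k_{n,m}'^2-n^2)^{-1}\gtrsim c\,n^{-3}$, whence $\sum_n n^2\cdot c\,n^{-3}=\infty$. There is no need for uniform (Olver) turning-point asymptotics, no need to count $n^{1/3}$-many zeros near the edge, and no need to push beyond your own crude estimate.

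One further inaccuracy in your last paragraph: you propose to sum over the ``$\asymp n^{1/3}$-many zeros $k'_{n,m}\in(n,n+c)$ for fixed small $c$,'' but since $k'_{n,1}\sim n+c_0 n^{1/3}$, the interval $(n,n+c)$ contains no zeros at all once $n$ is large. That particular device would fail. Again, it is moot once you recognize that your $n^{-3}$ lower bound already does the job.
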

\begin{proof}
By \cite[Theorem 2.3-b]{bbb12}, trace-class convergence with maximal rate holds true if and only if the operator  $\check{H}JK_1$ is a  Hilbert--Schmidt operator. Hence we are led to prove tat $\|\check{H}JK_1\|_{S_2}=\infty$.\\
Let $(f_i)$ be an orthonormal basis of $L^2(D)$. As $(e^{\pm in\theta})_{n\in\N}$ is an orthonormal basis of $L^2(C)$ , we achieve
\begin{eqnarray}
\check{H}JK_1f_j&=&\sum_{n\in\mathbb{N}}(e^{\pm in\theta},\check{H}JK_1f_j)_{L^2({C})}e^{\pm in\theta}\nonumber\\
&=&\sum_{n\in\mathbb{N}}(\check{H}e^{\pm in\theta},JK_1f_j)_{L^2({C})}e^{\pm in\theta}\nonumber\\
&=&\ds\sum_{n\in\mathbb{N}}\check{\lambda}_n(e^{\pm in\theta},JK_1f_j)_{L^2({C})}e^{\pm in\theta}.
\end{eqnarray}
Yielding,
\begin{eqnarray}
\|\check{H}JK_1f_j\|_{L^2({C})}^2=\sum_{n\in\mathbb{N}}\check{\lambda}_n^{2}|
(e^{\pm in\theta},JK_1f_j)_{L^2{C})}|^2.
\end{eqnarray}
Thus
\begin{eqnarray}
\|\check{H}JK_1\|_{S_2}^2 &=&\sum_j\|\check{H}JK_1f_j\|_{L^2({C})}^2
= \sum_j\sum_n
\check{\lambda}_n^{2}|(e^{\pm in\theta},JK_1f_j)_{L^2({C})}|^2\nonumber\\
&=& \sum_n
\check{\lambda}_n^{2}\sum_j|((JK_1)^*e^{\pm in\theta},f_j)_{L^2({D})}|^2\nonumber\\
&=& \sum_n \check{\lambda}_n^{2} \|(JK_1)^*e^{\pm in\theta}\|^2_{L^2(D)}.
\end{eqnarray}
Having formula (\ref{jk1Star}) in mind we get
\begin{eqnarray}
\|\check{H}JK_1\|_2^2&=&\check{\lambda_0}^{2}
\|(JK_1)^*1\|^2_{L^2(D)}+2\sum_{n\geq1}
\check{\lambda}_n^{2} \|(JK_1)^*e^{in\theta}\|^2_{L^2(D)}\nonumber\\
&=&\check{\lambda}_0^{2}
\|(JK_1)^*1\|^2_{L^2(D)}+\sum_{n\geq1}\frac{8\pi\,\check{\lambda}_n^{2}\,k_{n,1}'^2}{(1+k_{n,1}'^2)^2 (k_{n,1}'^2-n^2)}\nonumber\\
&+&\sum_{n\geq1,\,m\geq2}\frac{8\pi\,\check{\lambda}_n^{2}\,
k'^2_{n,m}}{(1+k_{n,m}'^2)^2 (k_{n,m}'^2-n^2)}.
\end{eqnarray}
Now we have to investigate the behavior of $k_{n,m}'$ for large $n$ and $m$.\\
According to \cite{Wong}, one has for, $n,m\geq1$,
\begin{eqnarray}
n+2^{-1/3}a_m n^{1/3}<k_{n,m}<n+2^{-1/3}a_m
n^{1/3}+\frac{3}{10}a_m^2 n^{-1/3}
\end{eqnarray}
where, $a_m$ is the $m^{th}$ positive root of the equation,
\begin{eqnarray}
A_i(-x)=\frac{1}{3}\sqrt{x}(J_{1/3}(\frac{2}{3}
x^{\frac{3}{2}})+J_{-1/3}(\frac{2}{3} x^{\frac{3}{2}}))=0
\end{eqnarray}
and $A_i$ is the Airy function.\\
In the following, $c$ denotes different positive constants.\\
For large $m$ one has (see \cite{Abro}), $a_m\sim c\,m^{2/3}$. Accordingly, there exists a positive constant $c$ such that for $n,m\geq 1$,
\begin{eqnarray}
n+cm^{2/3} n^{1/3}<k_{n,m}<n+cm^{2/3} n^{1/3}+cm^{4/3} n^{-1/3}
\end{eqnarray}
On the other hand, it is known that the zeroes of $J_n$ and $J'_n$ are interlaced in the following manner:
\begin{eqnarray}
n\leq\cdots<k'_{n,m}<k_{n,m}<k'_{n,m+1}<k_{n,m+1}<\cdots
\end{eqnarray}
Hence for $n\geq1,m\geq2$ one has,
\begin{eqnarray}
n+c(m-1)^{2/3} n^{1/3}<k'_{n,m}<n+cm^{2/3} n^{1/3}+cm^{4/3}
n^{-1/3}
\label{Compnm}
\end{eqnarray}
Furthermore one has (see \cite{Abro}),
\begin{eqnarray}
k'_{0,1}&=&0, \
k'_{0,m}=k_{1,m-1}\ \forall m\geq2\ (\mbox{ since
$J'_0(z)=-J_1(z)$})\nonumber\\
&& k'_{n,1}\sim n+cn^{1/3}\ \ \mbox{for large}\ n.
\end{eqnarray}
Using the latter asymptotic together with the fact that $\check{\lambda}_n\sim n$ we obtain:
\begin{eqnarray}
\sum_{n\geq1}\frac{8\pi\,\check{\lambda}_n^{2}\,k_{n,1}'^2}{(1+k_{n,1}'^2)^2
(k_{n,1}'^2-n^2)}<\infty.
\end{eqnarray}
Consequently, $\|\check{H}^{s}JK_1\|_{S_2}$ is finite if and only if $\ds\sum_{n\geq1,\,m\geq2}\frac{n^{2}}
{k'^2_{n,m}(k_{n,m}'^2-n^2)}$ is finite.\\
However, relying on the comparison (\ref{Compnm}), we get
\begin{eqnarray*}
\begin{array}{cccc}
\ds\sum_{m\geq2}\frac{1}{k'^2_{n,m}(k_{n,m}'^2-n^2)}&\geq&\ds\sum_{m\geq2}\frac{1}
{(2n+cm^{2/3} n^{1/3}+cm^{4/3} n^{-1/3})^3(cm^{2/3}
n^{1/3}+cm^{4/3} n^{-1/3})}\\
&=&\ds\sum_{m\geq2}\frac{1} {cn^{10/3}m^{2/3}(2+cm^{2/3}
n^{-2/3}+cm^{4/3} n^{-4/3})^3(1+m^{2/3} n^{-2/3})}\\
&\geq&\ds\frac{1}{cn^{10/3}}\int_2^\infty\frac{1}
{x^{2/3}(2+cx^{2/3} n^{-2/3}+cx^{4/3} n^{-4/3})^3(1+x^{2/3}
n^{-2/3})}dx\\
&=&\ds\frac{1}{cn^3}\int_{2/n}^\infty\frac{1} {u^{2/3}(2+cu^{2/3}
+cu^{4/3})^3(1+u^{2/3})}du\\
&\sim&\ds\frac{1}{cn^3}\int_0^\infty\frac{1} {u^{2/3}(2+cu^{2/3}
+cu^{4/3})^3(1+u^{2/3})}du=\frac{c}{n^3}
\end{array}
\end{eqnarray*}
Therefore, $\ds\sum_{n\geq1,\,m\geq2}\frac{n^2}
{k'^2_{n,m}(k_{n,m}'^2-n^2)}=\infty$ and
$\|\check{H}JK_1\|_2=\infty$, which finishes the proof.
\end{proof}
%
%
%
By the end of this section we shall utilize Theorem \ref{asym expan of eigenvalues} to perform second order asymptotic for the eigenvalues of $H_\beta$. Accordingly  for $\epsilon$ small enough and  sufficiently large $\beta$, the Laplacian  with Robin boundary conditions $H_\beta$ has exactly two eigenvalues counted according to their multiplicities, for $n\geq1,\ m\geq 1$ and only one for $n=0,\ m\geq 1$ in the ball $B(k_{n,m}^2,\epsilon)$.
\begin{theo}
Set $\lambda^{(\beta)}_{n,m},\ n\in\N,\ m\geq 1$ the eigenvalues of $H_\beta$. Then
\begin{eqnarray}\label{dev asy}
\lambda^{(\beta)}_{n,m}=k_{n,m}^2-\frac{2k_{n,m}^2}{\beta}+ \frac{\alpha_{n,m}}{\be^2}         +o(\frac{1}{\beta^2}),\
n\in\N,\ m\geq 1\ {\rm for\ large}\ \beta,
\end{eqnarray}
with,
\begin{eqnarray}
\alpha_{n,m}=2ik_{n,m}^2\frac{J'_n(i)}{J_n(i)}+\frac{4k_{n,m}^4}{1+k_{n,m}^2}+%
\sum_{q\not=m}\frac{4(1+k_{n,m}^2)k_{n,m}^2k_{n,q}^2}{(1+k_{n,q}^2)(k_{n,m}^2-k_{n,q}^2)}.
\end{eqnarray}

\end{theo}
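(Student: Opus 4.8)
The plan is to apply Theorem \ref{asym expan of eigenvalues} with $\Omega = D$ the unit disc and $\lambda_\infty = k_{n,m}^2$, so that all the abstract coefficients $M$ and $N$ reduce to concrete Bessel-function expressions. First I would record the normal derivatives of the Dirichlet eigenfunctions on the circle: from (\ref{Dir.eig.funct}), for the $L^2(D)$-normalized eigenfunction $f_{n,m}^\pm = c_{n,m}\,\varphi_{n,m}^\pm$ with $c_{n,m}^{-2} = \pi J_n'(k_{n,m})^2$ (using (\ref{bessel int}) and $J_n(k_{n,m})=0$), one has
\begin{eqnarray}
\frac{\partial f_{n,m}^\pm}{\partial\nu}\Big\rfloor_{r=1} = c_{n,m}\,k_{n,m}J_n'(k_{n,m})\,e^{\pm in\theta}.
\end{eqnarray}
Hence $\big\|\tfrac{\partial f_{n,m}^\pm}{\partial\nu}\big\|_{L^2(C)}^2 = c_{n,m}^2 k_{n,m}^2 J_n'(k_{n,m})^2 = k_{n,m}^2/\pi$ in $L^2(C,\tfrac{d\theta}{2\pi})$, and the cross term $(\tfrac{\partial f_{n,m}^+}{\partial\nu},\tfrac{\partial f_{n,m}^-}{\partial\nu})_{L^2(C)}$ vanishes by orthogonality of $e^{in\theta}$ and $e^{-in\theta}$. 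Therefore the matrix $M$ of Theorem \ref{asym expan of eigenvalues} is $k_{n,m}^2$ times the identity (size $2$ for $n\geq 1$, size $1$ for $n=0$), so all its eigenvalues $\alpha_i$ equal $k_{n,m}^2$. This immediately gives the first-order term $-\tfrac{1}{\beta}\cdot 2k_{n,m}^2$ after accounting for the normalization convention; more precisely one tracks that the eigenvalue expansion reads $\lambda_\infty - \tfrac{1}{\beta}\alpha_i + \cdots$ where here I must double-check the factor: comparing with the general formula $\lambda_{i,j,\beta} = \lambda_\infty - \tfrac1\beta\alpha_i + \tfrac{1}{\beta^2}\mu_{i,j}$ and the claimed $-2k_{n,m}^2/\beta$, the coefficient $\alpha_i$ must itself equal $2k_{n,m}^2$, which means the correct normalization of $f^\pm$ in $L^2(D,dx)$ (not $L^2(D,\tfrac{dx}{\pi})$) produces the extra factor $2$ — I would carry out this bookkeeping carefully since it is the one place a constant can slip.

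Next, since $M = k_{n,m}^2 I$ is scalar, every eigenprojection $P_i$ equals $P_\infty$ itself, so the second-order coefficients $\mu_{i,j}$ are the eigenvalues of the full matrix $N$ restricted to $E_\infty = \mathrm{span}\{f_{n,m}^+,f_{n,m}^-\}$. I would then compute each of the three pieces of $N$ from the theorem's formula. The term $\big(\tfrac{\partial}{\partial\nu}\mathcal{R}(\tfrac{\partial f_{n,m}^\pm}{\partial\nu}),\tfrac{\partial f_{n,m}^\pm}{\partial\nu}\big)_{L^2(C)}$ is evaluated using the explicit disc computation of $\check H$: since $\tfrac{\partial f_{n,m}^\pm}{\partial\nu}$ is a multiple of $e^{\pm in\theta}$ and $\check H e^{\pm in\theta} = i\tfrac{J_n'(i)}{J_n(i)} e^{\pm in\theta} = \check\lambda_n e^{\pm in\theta}$ by (\ref{eigen-check}), and $\tfrac{\partial}{\partial\nu}\mathcal R = \check H$ by Proposition \ref{description}(1) / Lemma-level identity used in the previous proof, this piece equals $\check\lambda_n \big\|\tfrac{\partial f_{n,m}^\pm}{\partial\nu}\big\|^2 = 2k_{n,m}^2\, i\tfrac{J_n'(i)}{J_n(i)}$ (again with the factor-2 normalization), matching the first term of $\alpha_{n,m}$. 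For the $E_\infty$-diagonal sum $\tfrac{1}{1+\lambda_\infty}\sum_{f_k\in E_\infty} a_{i,j,k}$, the only $f_k$ in $E_\infty$ are $f_{n,m}^\pm$ themselves; the diagonal pairing picks up $a_{i,i,i} = \big\|\tfrac{\partial f_{n,m}^\pm}{\partial\nu}\big\|^4 = (2k_{n,m}^2)^2$ and the cross terms vanish, giving $\tfrac{4k_{n,m}^4}{1+k_{n,m}^2}$, the second term of $\alpha_{n,m}$.

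For the third piece, $\sum_{f_k\in E_\infty^\perp} \tfrac{(1+\lambda_\infty)}{(1+\lambda_k)(\lambda_\infty - \lambda_k)} a_{i,j,k}$, I would note that $a_{i,j,k}$ involves $(\tfrac{\partial f_{n,m}^\pm}{\partial\nu}, \tfrac{\partial f_k}{\partial\nu})_{L^2(C)}$, and since $\tfrac{\partial f_{n,m}^\pm}{\partial\nu}\propto e^{\pm in\theta}$, this inner product is nonzero only when $f_k = f_{n,q}^{\pm}$ with the same angular index $n$ and the same sign, i.e. $f_k = f_{n,q}^\pm$ with $q\neq m$. For such $k$, $\lambda_k = k_{n,q}^2$ and $a_{i,i,k} = \big\|\tfrac{\partial f_{n,m}^\pm}{\partial\nu}\big\|^2\big\|\tfrac{\partial f_{n,q}^\pm}{\partial\nu}\big\|^2 = 4k_{n,m}^2 k_{n,q}^2$, so the sum becomes $\sum_{q\neq m}\tfrac{4(1+k_{n,m}^2)k_{n,m}^2 k_{n,q}^2}{(1+k_{n,q}^2)(k_{n,m}^2 - k_{n,q}^2)}$, the third term of $\alpha_{n,m}$. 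Since all three contributions are diagonal (scalar multiples of the identity on $E_\infty$ in the basis $\{f_{n,m}^+, f_{n,m}^-\}$), the matrix $N$ is itself scalar, so its repeated eigenvalue is exactly $\alpha_{n,m}$ as stated, and both eigenvalues $\lambda^{(\beta)}_{n,m}$ coincide to this order (reflecting that the double Dirichlet eigenvalue for $n\geq 1$ does not split at second order). The main obstacle I anticipate is purely bookkeeping: keeping the $\pi$'s and the factor of $2$ straight between the $L^2(C,\tfrac{d\theta}{2\pi})$ convention used for $\check H$ in the disc section and the $L^2(D,dx)$ convention for the eigenfunctions, together with verifying $\tfrac{\partial}{\partial\nu}\mathcal R = \check H$ on the relevant functions and the Bessel identity $\int_0^1 J_n^2(k_{n,m}r)r\,dr = \tfrac12 J_n'(k_{n,m})^2$; once those constants are nailed down, the argument is a direct substitution into Theorem \ref{asym expan of eigenvalues}.
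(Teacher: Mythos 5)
Your proposal follows the paper's proof essentially verbatim: specialize Theorem \ref{asym expan of eigenvalues} to the disc, compute the normalized Dirichlet eigenfunctions and their normal derivatives $\partial_\nu f^\pm = -\pi^{-1/2}k_{n,m}e^{\pm in\theta}$, use that $e^{\pm in\theta}$ are eigenfunctions of $\check H$ with eigenvalue $i J_n'(i)/J_n(i)$ so that $M$ and $N$ are scalar on $E_\infty$, and identify the three pieces of $N$ (the $\mathcal{R}$-term, the $E_\infty$-sum, and the $E_\infty^\perp$-sum filtered by the angular index $n$) with the three terms of $\alpha_{n,m}$. The one thing I would sharpen is your account of the factor you flag: $f^\pm$ is already correctly normalized in $L^2(D,dx)$ with $c_{n,m}^{-2}=\pi J_n'(k_{n,m})^2$, so the gap between your $\|\partial_\nu f^\pm\|^2_{L^2(C)}=k_{n,m}^2/\pi$ and the paper's $2k_{n,m}^2$ is a factor of $2\pi$ (not $2$), and it does not come from the $L^2(D)$ side at all; it comes from the boundary measure, since the paper's displayed formula $(\partial_\nu f_p,\partial_\nu f_q)_{L^2(C)}=2k_{n,m}^2\delta_{p,q}$ is computed with the unnormalized arc-length $d\theta$ rather than the normalized $\sigma=d\theta/2\pi$ that $L^2(C)$ was declared to carry earlier in that same section — a notational inconsistency in the paper that your more careful bookkeeping is in fact detecting, so resist the temptation to conjure the missing constant from the interior normalization.
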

\begin{proof}
Using  formulae (\ref{Dir.eig.funct}), (\ref{bessel int}) and
the recursion relation $J'_n(z)=\frac{n}{z}J_n(z)-J_{n+1}(z)$, we obtain that the
normalized Dirichlet eigenfunctions associated to the eigenvalue
$\lambda_{n,m}=k_{n,m}^2$ are given by,
\begin{eqnarray}
f_1(r,\theta)=\pi^{-1/2}
\frac{J_n(k_{n,m}r)}{J_{n+1}(k_{n,m})}\,e^{in\theta},\ \
f_2(r,\theta)=\pi^{-1/2}
\frac{J_n(k_{n,m}r)}{J_{n+1}(k_{n,m})}\,e^{-in\theta}
\end{eqnarray}
In particular, we obtain
\begin{eqnarray}
\frac{\partial\,f_{1,2}}{\partial
r}:=\frac{\partial\,f_{1,2}(r,\theta)}{\partial
r}\rfloor_{r=1}=\pi^{-1/2}k_{n,m}\frac{J'_n(k_{n,m})}{J_{n+1}(k_{n,m})}\,e^{\pm
in\theta}=-\pi^{-1/2}k_{n,m}\,e^{\pm in\theta}.
\end{eqnarray}
Then, for $p,q\in\{1,2\}$
\begin{eqnarray}\label{f1}
(\frac{\partial\,f_p}{\partial r},\frac{\partial\,f_q}{\partial
r})_{L^2({C})}=2k_{n,m}^2\delta_{p,q}
\end{eqnarray}
Moreover,
\begin{eqnarray}
{\cal{R}}(\frac{\partial\,f_{1,2}}{\partial
r})=-\pi^{-1/2}k_{n,m}\frac{J_n(ir)}{J_n(i)}\,e^{\pm in\theta}\\
\left(\frac{\partial}{\partial r}{\cal{R}}(\frac{\partial\,f_p}{\partial
r}),\frac{\partial\,f_q}{\partial
r}\right)_{L^2(C)}=2ik_{n,m}^2\frac{J'_n(i)}{J_n(i)}\,\delta_{p,q}.
\label{f2}
\end{eqnarray}
On the other hand, if $E_{n,m}$ is the eigenspace associated to
the eigenvalue $k_{n,m}^2$, one has
\begin{eqnarray*}
E_{n,m}=\mbox{Vect}(f_1,f_2) \ \mbox{and}\
E_{n,m}^\perp=\overline{\mbox{Vect}}\left(\varphi_{p,q}(r,\theta)=\pi^{-1/2}\frac{J_p(k_{p,q}r)}{J_{p+1}(k_{p,q})}\,e^{\pm
ip\theta},\ (p,q)\not=(n,m)\right)
\end{eqnarray*}
Consequently,
\begin{eqnarray}
a_{i,j,k}=(\frac{\partial f_i}{\partial r},\frac{\partial
f_k}{\partial r})_{L^2({C})}({\frac{\partial
f_j}{\partial r},\frac{\partial f_k}{\partial
r}})_{L^2(\mathcal{C})}=4k_{n,m}^4\delta_{i,k}\delta_{j,k},
\end{eqnarray}
\begin{eqnarray}\label{f3}
\frac{1}{1+k_{n,m}^2}\sum_{f_k\in
E_{n,m}}a_{i,j,k}=\frac{4k_{n,m}^4}{1+k_{n,m}^2}\delta_{i,j},
\end{eqnarray}
\begin{eqnarray}
a_i=(\frac{\partial f_i}{\partial r},\frac{\partial
\varphi_{p,q}}{\partial
r})_{L^2(\mathcal{C})}=2k_{n,m}k_{p,q}\delta_{\pm n,\pm p},
\end{eqnarray}
and
\begin{eqnarray}\label{f4}
\sum_{\varphi_{p,q}\in
E_{n,m}}\frac{(1+k_{n,m}^2)}{(1+k_{p,q}^2)(k_{n,m}^2-k_{p,q}^2)}\,a_i
{a_j}=\sum_{q\not=m}\frac{4(1+k_{n,m}^2)k_{n,m}^2k_{n,q}^2}{(1+k_{n,q}^2)(k_{n,m}^2-k_{n,q}^2)}\,\delta_{i,j}.
\end{eqnarray}
Finally, the desired asymptotic expansion (\ref{dev asy}) is
immediate from Theorem \ref{asym expan of eigenvalues} and
formulae (\ref{f1}), (\ref{f2}), (\ref{f3}), (\ref{f4}).
\end{proof}

\bibliography{biblioLargeRobin}

\end{document}